\date{\today}
\date{\today}
\def\m{{\mathfrak m}}
\def\1{{\bf 1}}
\def\ann{{\rm ann}}
\def\deg{\text{deg}\,}
\def\w{\wedge}
\def\dbar{\bar\partial}
\def\C{{\mathbb C}}
\def\w{{\wedge}}
\def\P{{\mathbb P}}
\def\supp{\text{supp}}
\def\M{{\mathcal M}}
\def\S{{\mathcal S}}
\def\W{{\mathcal W}}
\def\I{{\mathcal I}}
\def\reg{{\rm reg\,}}
\def\Hom{{\rm Hom }}
\def\codim{{\rm codim\,}}
\def\Im{{\rm Im\, }}
\def\K{{\mathcal K}}
\def\Ker{{\rm Ker\,  }}
\def\Ok{{\mathcal O}}
\def\L{{\mathcal L}}
\def\L{{\mathcal L}}
\def\U{{\mathcal U}}
\def\ann{{\rm ann\,}}
\def\J{{\mathcal J}}
\def\nbh{neighborhood }
\def\be{\begin{equation}}
\def\ee{\end{equation}}
\def\PM{{\mathcal{PM}}}
\def\Pk{{\mathbb P}}
\def\CH{{\mathcal{CH}}}
\def\pmm{{pseudomeromorphic }}
\newtheorem{thm}{Theorem}[section]
\newtheorem{lma}[thm]{Lemma}
\newtheorem{prop}[thm]{Proposition}
\theoremstyle{definition}
\theoremstyle{remark}
\newtheorem{preremark}[thm]{Remark}
\newtheorem{preex}[thm]{Example}
\newenvironment{remark}{\begin{preremark}}{\qed\end{preremark}}
\newenvironment{ex}{\begin{preex}}{\qed\end{preex}}
\numberwithin{equation}{section}
\title{A global Brian\c con-Skoda-Huneke-Sznajdman theorem}
\begin{document}

\date{\today}

%%\def\newop#1{\expandafter\def\csname #1\endcsname{\mathop{\rm #1}\nolimits}}
%%\newop{span}

%%\nocite{*}
%%\bibliographystyle{plain}

\author{Mats Andersson}

\address{Department of Mathematics\\Chalmers University of Technology and the University of
Gothenburg\\S-412 96 G\"OTEBORG\\SWEDEN}

\email{matsa@chalmers.se}

%%\subjclass{32A26, 32A27, 32B15,  32C30}

\thanks{The author was
  partially supported by the Swedish
  Research Council.}

\begin{abstract} 
We prove a global effective membership result for polynomials
on a non-reduced algebraic subvariety of $\C^N$. It can be seen
as a global version of a recent local result of
Sznajdman, generalizing the Brian\c con-Skoda-Huneke theorem
for the local ring of holomorphic functions at a point on a reduced analytic
space. 
%%,  
%%
\end{abstract}

\maketitle

\section{Introduction}\label{puma}
Let $x$ be a point on a smooth analytic variety $X$ of pure dimension $n$ and let $\Ok_x$ be the local ring of 
holomorphic functions. The classical Brian\c con-Skoda theorem, \cite{BS},  states that if 
 $(a)=(a_1,\dots, a_m)$ is any ideal in $\Ok_x$ and $\phi$ is in $\Ok_x$, then $\phi\in(a)^r$
if %%%%%%%%as soon as 
\begin{equation}\label{bshvillkor}
|\phi|\le C|a|^{\nu+r-1}
\end{equation}
holds with $\nu=\min(m,n)$. The proof  given 
in \cite{BS} is purely analytic.  However, the condition \eqref{bshvillkor} is equivalent to 
saying that
$\phi$ belongs to the the integral closure
$\overline{(a)^{\nu+r-1}}$, and thus the theorem admits a purely algebraic formulation. 
Therefore  it was somewhat astonishing that it took several years before 
algebraic proofs were found, \cite{LS,LT}. Later on,  Huneke, \cite{Hun}, proved  
a far-reaching algebraic
generalization which contains the following statement for non-smooth $X$.

\smallskip\noindent
{\it Let $x\in X$ be a point on a reduced analytic variety of pure dimension.
There is a number $\nu$ such 
that if $(a)=(a_1,\dots, a_m)$ is any ideal in $\Ok_x$ and $\phi$ is in $\Ok_x$, then
\eqref{bshvillkor}
implies that $\phi\in (a)^r$.} 
\smallskip

An important point  is that $\nu$ is uniform  with respect to both $(a)$ and  $r$.
The smallest possible such $\nu$ is called the  Brian\c con-Skoda number, and it depends 
on the complexity of the singularities of $X$ at $x$.  
An analytic proof of this statement  appeared in \cite{ASS}.
A nice variant for a non-reduced $X$ of pure dimension
is formulated and proved in \cite{Sz}. 
 
Let $x$ be a point on a non-reduced analytic space $X$ of pure dimension $n$,  and let
$X_{red}$ be the underlying reduced space,
cf., Section \ref{svart0} below.  There is a natural surjective mapping
$\Ok_{X,x}\to\Ok_{X_{red},x}$. Let
$i\colon X\to\Omega\subset\C^N$ be a local embedding, and let $\J_{X,x}$ be the associated
local ideal in $\Ok_{\Omega,x}$, so that $\Ok_x=\Ok_{X,x}=\Ok_{\Omega,x}/\J_{X,x}$. 
A  holomorphic differential operator $L$ in $\Omega$ is Noetherian  
at $x$ if $L\phi$ vanishes
on $X_{red,x}$ (or equivalently, $L\phi\in\sqrt{\J_{X,x}}=\J_{X_{red},x}$) for all
$\phi\in\J_{X,x}$.
Such an $L$ defines an intrinsic  mapping
$$
L\colon \Ok_{X,x}\to \Ok_{X_{red},x}, \quad \phi\mapsto L\phi.
$$
\begin{thm}[Sznajdman, \cite{Sz}]\label{szthm}
Given $x\in X$, there is a finite set $L_\alpha$ of Noetherian operators 
at $x$ and a number $\nu$ such that for each ideal $(a)=(a_1,\ldots,a_m)\subset\Ok_{X,x}$ and
$\phi\in\Ok_{X,x}$,
\begin{equation}\label{bergen}
|L_\alpha\phi|\le C|a|^{\nu+r}\  {\rm on}\ X_{red,x} %% 
\end{equation}
for all $\alpha$, implies that  $\phi\in(a)^r$.
\end{thm}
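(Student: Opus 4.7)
The plan is to embed $X$ locally in $\C^N$ and combine two residue currents: one representing the ideal sheaf $\J_{X,x}$ and one representing the ideal $(a)^r$ in the ambient space. The combined current should have as its annihilator on holomorphic functions exactly $(A)^r+\J_{X,x}$, and the hypothesis on $L_\alpha\phi$ should force this current to vanish on any holomorphic lift of $\phi$.

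Concretely, I would choose a local embedding $i\colon X\to\Omega\subset\C^N$ so that $\Ok_{X,x}=\Ok_{\Omega,x}/\J_{X,x}$ and lift $\phi$ and the $a_j$ to $\Phi\in\Ok_{\Omega,x}$ and $A_j\in\Ok_{\Omega,x}$. The desired membership $\phi\in(a)^r$ is then equivalent to $\Phi\in(A)^r+\J_{X,x}$ in $\Ok_{\Omega,x}$. Starting from a Hermitian free resolution of $\Ok_\Omega/\J_{X,x}$, one constructs an Andersson--Wulcan residue current $R^X$ satisfying the duality principle $\Phi\cdot R^X=0\Leftrightarrow \Phi\in\J_{X,x}$. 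The current $R^X$ is pseudomeromorphic, and by the Coleff--Herrera/Ehrenpreis--Palamodov representation extended to the non-reduced setting, its action on a holomorphic $\Phi$ can be written as a finite sum of integrals over $X_{red,x}$ of $L_\alpha\Phi$, where $L_\alpha$ is a finite collection of Noetherian operators determined by the resolution.

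Next, for the ideal $(A)^r$ in the smooth ambient $\Omega$ I would use a Bochner--Martinelli (or Koszul) residue current $R^A_r$ together with a Koppelman-type integral formula $\Phi=T(\Phi)+S(R^A_r\cdot \Phi)$, in which $T(\Phi)\in(A)^r$; the classical Brian\c con--Skoda mechanism then shows that $\Phi\in(A)^r$ as soon as $|\Phi|\lesssim|A|^{\min(m,N)+r-1}$. The heart of the argument is the combination step: using the Andersson--Wulcan product construction, form a current $R=R^X\wedge R^A_r$ (suitably regularized) whose annihilator on holomorphic functions is precisely $(A)^r+\J_{X,x}$ and whose action on $\Phi$ is a finite sum of principal-value integrals over $X_{red,x}$ of expressions $L_\alpha\Phi$ paired against Bochner--Martinelli-type kernels built from $A$. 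If $\nu$ is chosen larger than the sum of the Brian\c con--Skoda number of $X_{red,x}$ (which is the main estimate from \cite{ASS}) and a quantity measuring the complexity of the non-reduced structure encoded in $\J_{X,x}$, then the pointwise bound $|L_\alpha\phi|\le C|a|^{\nu+r}$ on $X_{red,x}$ kills every one of these residue integrals, yielding $R\cdot\Phi=0$ and hence $\phi\in(a)^r$.

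The main obstacle I expect is the combination step itself: one has to show that the product $R^X\wedge R^A_r$ is a well-defined pseudomeromorphic current with the expected annihilator, and that its action can be estimated in terms of the Noetherian expansion of $R^X$ and the Bochner--Martinelli pole orders of $R^A_r$ uniformly in $r$. This requires a careful use of the dimension principle for pseudomeromorphic currents on $X_{red,x}$ together with a choice of Hermitian metric on the resolution of $\Ok_\Omega/\J_{X,x}$ that makes the pointwise estimates explicit, so that the correct uniform $\nu$ emerges and absorbs both the singularities of $X_{red,x}$ and the pole orders introduced by passing to $r$-th powers.
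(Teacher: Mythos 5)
The paper does not prove Theorem~\ref{szthm} itself; it is quoted from Sznajdman \cite{Sz}, and what the paper does prove (via Proposition~\ref{daggmask} and Lemma~\ref{pixbo}) is the global analogue, using the same architecture you describe: an Andersson--Wulcan current $R^X$ with annihilator $\J_X$, a Bj\"ork/Coleff--Herrera factorization of $R^X$ producing Noetherian operators $L_\alpha$ via Theorem~\ref{skohorn}, the product current $R^f\wedge R^X$, a vanishing result under the hypothesis on $|L_\alpha\phi|$, and a division/Koppelman step. So your overall plan is the right one.

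There is, however, one point where your sketch, read literally, would break down. You propose to take ``for the ideal $(A)^r$ $\dots$ a Bochner--Martinelli (or Koszul) residue current $R^A_r$.'' If this means the Bochner--Martinelli current of the ideal $(A)^r$, i.e.\ the Koszul-type current built from the $\binom{m+r-1}{r}$ monomial generators $A^\beta$ with $|\beta|=r$, then the associated pole orders and the relevant ``Brian\c con--Skoda number'' grow with $r$, and the exponent you would obtain would \emph{not} be of the uniform form $\nu+r$ with $\nu$ independent of $r$. The uniformity in $r$ (and in $m$) is exactly what makes the Huneke/Sznajdman statement nontrivial. The correct mechanism, used in \cite{ASS}, \cite{Sz}, and in Section~\ref{grotesk} here, is to keep the $r$-independent current $R^a$ associated with the tuple $(a_1,\dots,a_m)$ (reduced to at most $n+1$ generic generators if need be), form $R^a\wedge R^X$ once and for all, and produce the $r$-th power in the holomorphic term of a \emph{weighted} Koppelman/division formula, where the weight is raised to a power depending on $r$. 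The current does not change with $r$; only the weight does, and that is what forces the bound $|a|^{\nu+r}$ rather than something like $|a|^{\nu r}$ or worse.

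Two smaller remarks. First, the annihilator of the product current $R^a\wedge R^X$ is not shown to be precisely $(a)+\J_X$ — Lemma~\ref{pixbo} gives only the one-sided implication $\phi R^a\wedge R^X\wedge\Omega=0\Rightarrow\phi\in(a)+\J_X$, which is what is needed; so your claim that the combined current's annihilator ``is precisely $(A)^r+\J_{X,x}$'' is stronger than what the machinery provides or requires. Second, the vanishing $\phi R^a\wedge R^X\wedge\Omega=0$ in \cite{Sz} is obtained not by estimating residue integrals directly but by passing to a log resolution of $X_{red}$ where the pullbacks of the coefficients of $\widehat M_\alpha$, of $b$, and of $R^a$ become monomials, and then comparing vanishing orders; this is where the explicit $\nu$ comes from and where the uniformity is checked. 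Your phrase ``kills every one of these residue integrals'' is morally right but glosses over this essential step.
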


Here $|a|$  means  $|a_1|+\cdots + |a_m|$  (where $|a_j|$ is the modulus of the image
of $a_j$ in $\Ok_{X,x}$), which up to  constants  is independent of the choice
of generators of the ideal  $(a)$.  The condition  \eqref{bergen} means that 
$L_\alpha\phi$ is in the integral closure of the image 
in $\Ok_{X_{red},x}$ of $(a)^{\nu+r}$.  

Applying to $(a)=(0)$ we find that $L_\alpha\phi=0$ on $X_{red,x}$ for all $\alpha$ implies
that $\phi=0$ in $\Ok_{X.x}$. %%  

We now turn our attention to global variants.
Let $V$ be a purely  $n$-dimensional algebraic subvariety of $\C^N$ and let 
$J_V\subset\C[x_1,\ldots,x_N]$ be the associated ideal. 
Assume that
$F_j$ are polynomials in  $\C^N$ of degree $\le d$.
If the polynomial
$\Phi$ belongs to the restriction of the ideal $(F_1,\ldots,F_m)$ to $V$, i.e., there are polynomials $Q_j$ such that
\begin{equation}\label{hummer}
\Phi=\sum_1^m F_jQ_j  + J_V ,
\end{equation}
then it is natural to ask for a representation \eqref{hummer}
with some control of the degree of $Q_j$.
It is well-known that if $V=\C^N$, then in general 
$\max_j\deg F_jQ_j$ must be doubly exponential in $d$, i.e., like $2^{2^d}$.  However,
in the Nullstellensatz, i.e., $\Phi=1$,  then (roughly speaking) $d^n$ is enough, 
this is due to Koll\'ar, \cite{Koll}, and Jelonek, \cite{Jel}.
In \cite{Hick} Hickel proved a global effective version of the Brian\c con-Skoda theorem
for polynomial ideals in $\C^n$, 
basically saying that if $|\Phi|/|F|^{\min(m,n)}$ is locally bounded, 
then there is a representation \eqref{hummer} in $\C^n$
with $\deg F_jQ_j\le \deg\Phi + Cd^n$. For the precise statement, see \cite{Hick} or \cite{AWsemester}.
In  \cite[Theorem A]{AWsemester} a generalization to polynomials on reduced algebraic subvarieties
of $\C^N$ appeared.   Our objective in this paper is to
find a generalization to a not necessarily reduced algebraic subvariety $V$ of $\C^N$
of pure dimension $n$.  
%%I  

Let $X$ be the closure (see Section~\ref{svart00})  of $V$ in $\P^N$ and let $X_{red}$ be the underlying reduced variety.
Given polynomials  $F_1,\ldots, F_m$, let $f_j$ denote the corresponding $d$-homogenizations,
considered as sections of the line bundler $\Ok(d)|_{X_{red}}$,  and let 
$\J_f$ be the coherent analytic sheaf on $X_{red}$ generated by $f_j$.
Furthermore, 
let $c_\infty$ be the maximal codimension of  the
so-called  {\it distinguished varieties} of the sheaf $\J_f$,
in the sense of Fulton-MacPherson,   
that are contained in 
$$
X_{red,\infty}:=X_{red}\setminus V_{red},
$$
see Section~\ref{grotesk}.
It is well-known that the codimension of a distinguished variety
cannot exceed the number $m$, see, e.g., \cite[Proposition~2.6]{EL}, and thus
$$
c_\infty\le\min(m,n).
$$
We let $Z_f$ denote the zero variety of $\J_f$ in $X_{red}$.

Let  $\reg X$ denote the  so-called {\it (Castelnuovo-Mumford) regularity} 
of $X\subset\P^N$, see Section~\ref{svart00} below.  
%%e.g., \cite{Eis2}.  %% for the definition. 
We can now formulate the main result of this paper.

\begin{thm}[Main Theorem]\label{huvudsats}
 Assume that  $V$ is an algebraic subvariety  of $\C^N$ of pure
dimension $n$ and let $X$ be its closure in $\P^N$.  There is a finite set of holomorphic
differential operators $L_\alpha$ on $\C^N$ with polynomial coefficients and 
a number $\nu$ so that the following holds:

\smallskip
\noindent  (i) For each point $x\in V$ the germs of $L_\alpha$ are Noetherian
operators at $x$ such that the conclusion in Theorem~\ref{szthm} holds.

\smallskip
\noindent  (ii)  If $F_1,\ldots,F_m$ are polynomials of degree
$\le d$, $\Phi$ is a polynomial,  and
\begin{equation}\label{kaka}
|L_\alpha\Phi| /|F|^{\nu} \text{ is locally bounded on } V_{red} 
\end{equation}
for each $\alpha$, 
%%%
then there are polynomials $Q_1,\ldots, Q_m$ such that 
 \eqref{hummer} holds %%on $V$ 
and 
\begin{equation}\label{chimpans}
\deg (F_j Q_j)\leq 
\max \big(\deg \Phi + \nu d^{c_\infty} \deg X_{red}, (d-1)\min(m,n+1) +\reg X\big).
\end{equation}
\end{thm}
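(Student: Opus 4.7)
The plan is to work within Andersson's framework of residue currents and global Koppelman integral formulas on $\P^N$, extending the reduced case of \cite{AWsemester} by injecting Sznajdman's local non-reduced theorem (Theorem~\ref{szthm}) as the new local ingredient. First I would construct, for the tuple $f=(f_1,\ldots,f_m)$ of $d$-homogenizations regarded as sections of $\O(d)|_{X_{red}}$, a globally defined residue current $R^f$ on $X_{red}$ whose support is contained in the union of the Fulton--MacPherson distinguished varieties of $\J_f$. The finite family of Noetherian operators $L_\alpha$ in part (i) would be extracted from the local structure of $\O_X$ along $V$; applying Theorem~\ref{szthm}, the hypothesis \eqref{kaka} then implies that at every $x\in V$, $\Phi$ lies in $(F)^\nu$ in $\Ok_{X,x}$. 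By the duality principle relating ideal membership on a (non-reduced) space to annihilation of the associated residue current, this gives $\Phi\cdot R^f = 0$ as a current on $V\subset X$.

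The next step is to promote this vanishing to a globally vanishing current on all of $X$. The only obstruction lives on $X_{red,\infty}=X_{red}\setminus V_{red}$: decomposing $R^f$ along its distinguished components, I would estimate how many twists by $\O(1)$ are needed to kill the contribution from distinguished varieties at infinity. A Bezout/intersection-theoretic count, together with the codimension bound $c_\infty\le\min(m,n)$ and the fact that such a variety has degree at most $d^{c_\infty}\deg X_{red}$, produces the bound $\nu d^{c_\infty}\deg X_{red}$ and yields the first entry of the maximum in \eqref{chimpans}. Once $\Phi R^f$ vanishes globally, a Koppelman/division formula on $\P^N$ adapted to the closed subscheme $X$ writes $\Phi=\sum F_j Q_j$ modulo $J_V$ with $Q_j$ obtained as explicit currents paired against weight forms. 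The cohomological cost of this division, controlled by $\reg X$ so that the relevant higher cohomology of $\O_X(k)$ vanishes, gives the alternative bound $(d-1)\min(m,n+1)+\reg X$, and one takes the maximum of the two.

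The hardest step will be the interplay between the pointwise output of Sznajdman's theorem and a single globally defined residue current. Sznajdman's $L_\alpha$ and $\nu$ are a priori allowed to depend on the point $x\in V$; to obtain part (i) in the form stated one must coherently patch a finite family of Noetherian operators with polynomial coefficients on $\C^N$ that serve every $x\in V$ simultaneously, and moreover verify that this very family is dual to the globally constructed current $R^f$. This coherence argument, combined with the careful bookkeeping of the multiplicities of $R^f$ at the distinguished varieties of $\J_f$ inside $X_{red,\infty}$ needed to isolate the optimal exponent $c_\infty$ rather than the cruder $\min(m,n)$, is where I expect most of the technical effort to lie.
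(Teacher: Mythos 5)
Your high-level architecture is right in outline (residue currents on $\P^N$, Sznajdman's local theorem as the new local engine, an Ein--Lazarsfeld estimate to handle the hyperplane at infinity, and a regularity bound to solve $\dbar$-equations), but the proposal has several substantive gaps that together make it not just a vague sketch of the paper's argument but a genuinely incomplete one.

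First, and most importantly, you never introduce the current $R^X$. The proposal builds only a current $R^f$ attached to the tuple $f$, but the entire non-reduced structure of $X$ is encoded in the residue current $R^X$ associated with the ideal sheaf $\J_X$, satisfying $\ann R^X=\J_X$ (Section~5.1). The object the paper actually works with is the product $R^f\w R^X\w\Omega$, and it is the annihilation of this product that controls membership in $(f)+\J_X$ (Lemma~\ref{pixbo}). Without $R^X$ there is no current whose annihilation sees $\J_X$, and no way to produce the Noetherian operators. Indeed, the $L_\alpha$ in the paper are not ``extracted from the local structure of $\Ok_X$'' in some unspecified way: they come from a \emph{global} Coleff--Herrera factorization $R^X\w\Omega=b\mu$ followed by the Bj\"ork-type representation of Theorem~\ref{skohorn}. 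You flag ``coherently patch a finite family of Noetherian operators'' as the hard step but leave it unresolved; this global construction is precisely what resolves it, and it is missing from the proposal.

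Second, your logical route from the hypothesis \eqref{kaka} to the vanishing of the residue current is a detour that is not obviously valid. You propose: apply Theorem~\ref{szthm} to get local membership $\Phi\in(F)$ at each $x$, then invoke ``duality'' to conclude $\Phi R^f=0$. But for a product current like $R^f\w R^X$ the direction of duality that is actually available is the opposite one: vanishing of $\Phi R^f\w R^X\w\Omega$ implies $\Phi\in(f)_x+\J_{X,x}$ (Lemma~\ref{pixbo}), not the converse. The paper sidesteps this by reusing the internal machinery of Sznajdman's proof (not the theorem statement as a black box) to show directly that $|\L_\alpha\phi|\le C|a|^\nu$ forces $\phi R^a\w R^X\w\Omega=0$ (Proposition~\ref{daggmask}); uniformity of $\nu$ is then obtained by compactness of $X_{red}$. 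Your route would require establishing the converse implication for the product current, which is nontrivial and unaddressed.

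Third, the degree bound $\deg\Phi+\nu d^{c_\infty}\deg X_{red}$ is not obtained by a Bezout count on distinguished varieties but by Lemma~\ref{sot}: one pulls back to the normalized blow-up along $\J_f$, uses the Ein--Lazarsfeld bound $r_j\le d^{\codim\pi(W_j)}\deg X_{red}$ on the multiplicities of the exceptional divisors, and — crucially — uses that the operators $\L_\alpha$ do \emph{not} involve $\partial/\partial x_0$, so that the extra vanishing of $\phi=x_0^{\rho-\deg\Phi}\varphi$ along the hyperplane at infinity survives application of $\L_\alpha$. That last observation is the hinge of the estimate at infinity and is absent from the proposal.

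Finally, a small but real error: from \eqref{kaka} one deduces (via Proposition~\ref{daggmask} and Lemma~\ref{pixbo}) that $\Phi\in(F)_x+\J_{X,x}$, i.e.\ membership in the ideal, not in $(F)^\nu$ as you write.
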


If there are no 
distinguished varieties of $\J_f$ contained in $X_{red,\infty}$,  then 
$d^{c_\infty}$ shall be interpreted as $0$.
\smallskip

In case $V$ is reduced we can choose $L_\alpha$ as just the identity; then
(ii) is precisely (part (i) of) Theorem~A in \cite{AWsemester}. If $V=\C^n$
we get back Hickel's theorem, \cite{Hick},   mentioned above.

\begin{ex}  
If we apply  Theorem~\ref{huvudsats} to  Nullstellensatz  data,
i.e., $F_j$ with no common zeros on $V$
and $\Phi=1$, then the hypothesis \eqref{kaka} is fulfilled, and we
thus get $Q_j$ such that $F_1Q_1+\cdots +F_mQ_m -1$ belongs to $J_V$  and
$$
\deg(F_jQ_j)\le
\max\big(\nu d^{c_\infty} \deg X_{red}, (d-1)\min(m,n+1) +\reg X\big).
$$
See \cite[Section~1]{AWsemester} for a discussion of this estimate in the reduced case.
\end{ex}

\begin{ex}\label{macex}  
 If $f_j$ have no common zeros on $X$ and $\Phi$ is any polynomial,  % 
then there is a solution to \eqref{hummer}
such that
$$
\deg F_jQ_j\le \max(\deg \Phi, (d-1)(n+1)+\reg X).
$$
If  $X=\P^n$, then $\reg X=1$ and so we get back the
classical Macaulay theorem.
\end{ex}

\begin{remark}
It follows that $L_\alpha$ is a set of Noetherian operators such that a polynomial
$\Phi\in \C[x_1,\ldots,x_N]$ is in $J_V\subset\C[x_1,\ldots,x_N]$
if and only $L_\alpha\Phi=0$ on $V_{red}$ for each $\alpha$.
The existence of such a set is well-known, and 
a key point in the celebrated Ehrenpreis-Palamodov fundamental theorem, 
\cite{Ehr} and \cite{Pal}; see also, e.g., \cite{Bj0} and \cite{Ob}.
\end{remark}

\begin{remark} It turns out, see Theorem~\ref{skohorn} below, that the Noetherian operators
$L_\alpha$ in Theorem~\ref{huvudsats} have the following additional property:
For each $\alpha$ there is a finite set of holomorphic differential operators $M_{\alpha,\gamma}$
such that
\begin{equation}\label{ormvrak}
L_{\alpha}(\Phi\Psi)=\sum_{\gamma} L_\gamma\Phi\M_{\alpha,\gamma}\Psi
\end{equation}
for  any holomorphic functions  $\Phi$ and $\Psi$. This formula shows that set 
of functions that satisfy \eqref{bergen} at a point $x$ is indeed an ideal.
\end{remark}

%\smallskip
By homogenization, this kind of effective results can be reformulated 
as geometric statements:  Let
 $z=(z_0,\ldots,z_N)$,  $z'=(z_1,\ldots,z_N)$, let
 $f_{i}(z):=z_0^{d}F_i(z'/z_0)$ be the $d$-homogenizations of $F_i$, considered
as sections of $\Ok(d)\to\P^N$, and let
$\varphi(z):=z_0^{\deg \Phi}\Phi(z'/z_0)$.
Then there is a representation \eqref{hummer} on $V$ with $\deg(F_jQ_j)\le\rho$
if and only if there are  sections $q_i$ of $\Ok(\rho-d)$ on $\P^N$ such that
\begin{equation}\label{hoppla}
f_{1} q_1+\cdots +f_mq_m=z_0^{\rho-\deg\Phi}\varphi
\end{equation}
on  $X$ in  $\P^N$; that is, the difference of the right and the left hand sides
belongs to the sheaf $\J_X$.

\smallskip

To prove Theorem~\ref{huvudsats} we first have to define a suitable set of global Noetherian operators
on $\Pk^N$.
This is done in Section~\ref{pyts}
following the ideas of Bj\"ork, \cite{Bj},  in the local case, starting
from  a representation of $\J_X$ as the annihilator of a tuple
of so-called Coleff-Herrera currents on $\Pk^N$. 
The rest of the proof of Theorem~\ref{huvudsats}, given in Section~\ref{grotesk},  
follows to a large extent the proof of Theorem~A in \cite{AWsemester}.
By the construction in  \cite{AW1} we have a residue current  $R^X$ associated with $\J_X$ such that
the annihilator ideal of  $R^X$ is precisely  $\J_X$.  Following the ideas in \cite{AWsemester} 
we then form the ``product'' $R^f\w R^X$, where $R^f$ is the current of Bochner-Martinelli type
introduced in \cite{A2}, inspired by \cite{PTY}. By computations as in \cite{Sz},
the condition \eqref{kaka} ensures that
$\phi$ annihilates this current at each point $x\in V_{red}$. 
If $\rho$ is large enough, 
this is reflected by the first entry of the right hand side of \eqref{chimpans}, then
a  geometric estimate from \cite{EL} ensures that the $\rho$-homogenization
$\phi$ of $\Phi$ indeed satisfies a condition
like \eqref{kaka} even at infinity. Therefore $\phi$ annihilates the current
$R^f\w R^X$ everywhere on $\Pk^N$.  For this argument it is important that the
Noetherian operators extend to $\Pk^N$.  
The proof of Theorem~\ref{huvudsats}  is then concluded along the same lines as in 
\cite{AWsemester} by solving a sequence of $\dbar$-equations. If $\rho$
is large enough, this is reflected by the second entry in the right hand side of
 \eqref{chimpans}, there are no cohomological obstructions.
We then get a global representation of 
$\phi$ as a member of $\Ok(\rho)\otimes (\J_f+\J_X)$.
After dehomogenization we get the desired representation \eqref{hummer}.

In Section~\ref{bakgrund} we collect some necessary background material.
In Section  \ref{svart2} we discuss global Coleff-Herrera currents on
projective space. As mentioned above, the proof of our main theorem is 
given in the last two sections.

\smallskip 
\textbf{Acknowledgement}:  We would like to thank the referee for careful
reading and several suggestions to improve the presentation.

\section{Preliminaries}\label{bakgrund}

In this section we collect various definitions and facts that will be used later on.

\subsection{Non-reduced analytic space}\label{svart0}
A reduced analytic space $Z$  %of pure dimension $n$ 
is locally described as an analytic subset of some open set $\Omega\subset\C^N$,
and the sheaf $\Ok_Z$ of holomorphic functions on $Z$, the {\it structure sheaf},
is then isomorphic to $\Ok_\Omega/\J_Z$, where $\J_Z$ is the ideal sheaf of functions in $\Omega$ that vanish on $Z$.  
A non-reduced analytic space $X$ (also referred to as an analytic scheme)
with underlying
reduced space  $Z$ and {\it structure sheaf}  $\Ok_X$ is locally of the form $\Ok_X=\Ok_\Omega/\J$, where $\J\subset\J_Z$ is a
coherent ideal sheaf with common zero set $Z$.  
Thus $\J_Z=\sqrt{\J}$ and $\Ok_Z$ is obtained from $\Ok_X$ by taking the quotient by all nilpotent elements in $\Ok_X$.  Given the non-reduced space $X$ we 
denote the underlying reduced space by $X_{red}$.   

The space $X$ has {\it pure dimension}
$n$ if for each $x\in X_{red}$,
all the associated prime ideals of the local ring $\Ok_x$ has dimension $n$.
In particular, then $X_{red}$ has pure dimension $n$.  
%If $X_{red}$ has pure dimension $n$, then $X$ has pure dimension $n$ if(f)
%any section of $\Ok_X$ in some open set $\U\subset X_{red}$ that vanishes %generically, vanishes identically.

\subsection{Algebraic and projective spaces}\label{svart00}
We will only be concerned with analytic spaces that are globally embedded in
some $\C^N$ or $\P^N$. 
An analytic subspace $V\subset\C^N$ is {\it algebraic}
if the sheaf $\J_V$ is generated by a finite number of polynomials.  Let $J_V$ be the 
corresponding ideal in the polynomial ring $\C[x_1,\ldots, x_N]$.  Let $J_X$ be the
homogeneous ideal in the graded ring $\C[x_0,\ldots,x_N]$ generated by homogenizations
of the elements in $J_V$.  If $J_V$ has pure dimension $n$, then $J_X$ has pure dimension $n+1$.  In particular, $0$ is not an associated prime ideal. 
Each homogeneous polynomial corresponds to a global section of the line bundle
$\Ok(\ell)\to \Pk^N$ for some $\ell$. These sections define a coherent analytic sheaf $\J_X$ 
over $\Pk^N$ of pure dimension $n$. We define the closure $X$ of $V$ as the analytic
subspace of $\Pk^N$ with structure sheaf $\Ok_X=\Ok_{\Pk^N}/\J_X$. 
 It is clear that the sheaf $\J_X$ coincides with  the sheaf $\J_V$ defined by the
ideal $J_V$  in $\C^N$.

%\begin{remark} It is {\it not} enough to just take a set of generators for $J_X$
%and define $J_X$ as the homogeneous ideal generated by the homogenizations
%of the generators. In fact, $x_1$ and $x_1+1$ generate  $\C[x_1]$, i.e., defines
%the empty set, wheras the homogenizations
%$x_1$ and $x_1+x_0$ define the point $x_0=0$ in $\Pk^1$.  
%% 
%\end{remark}

Let $S$ be the graded ring $\C[x_0,\ldots,x_N]$ and let $S(-d)$ be the
$S$-module that is equal to $S$ but with the gradings shifted by $d$. 
Let $J_X$ be the homogeneous ideal in $S$ of all forms that belong to
$\J_X$. Since $0$ is not an associated prime ideal of $J_X$, cf., \cite[Corollary 20.14]{Eis}, 
see also \cite[Section 2.7]{AWsemester},
 there is a graded free resolution 
\begin{equation}\label{plast}
   0\to \oplus_1^{r_N}S(-d_N^{i})\stackrel{c_N}{\to}      \ldots         \stackrel{c_2}{\to}      \oplus_1^{r_1}S(-d_1^{i})  \stackrel{c_1}{\to} S\to S/J_X\to 0
\end{equation}
of the $S$-module $S/J_X$, 
where $c_k=(c_k^{ij})$ are matrices of homogeneous forms in $\C^{N+1}$
with $\deg c_k^{ij}= d^j_k-d^{i}_{k-1}$. 
The number
\begin{equation}\label{cn}
\reg X:= \max_{k,i} (d_k^{i}-k)+1
\end{equation}
is called the Castenouvo-Mumford regularity of $X$ in $\P^N$, see, e.g.,
\cite{Eis2}. This number
describes the complexity of the embedding of $X$ in $\P^N$;
thus two isomorphic analytic spaces embedded in different ways may
have different regularities.

\subsection{Some residue theory}\label{svart}
%%A 
Let $Y$ be a (smooth) complex manifold of dimension $N$.
Given a holomorphic function $f$ on $Y$, following Herrera and Lieberman, \cite{HeLi}, 
one can define the principal value current
$1/f$ as the limit
$$
\lim_{\epsilon\to 0}\chi(|f|^2v/\epsilon)\frac{1}{f},
$$
where $\chi(t)$ is the characteristic function of the
interval $[1,\infty)$ or a smooth approximand and $v$ is any smooth
strictly positive function.
The existence of this limit for a general $f$ relies on
Hironaka's theorem that ensures that there is a modification $\pi\colon\widetilde Y\to Y$
such that $\pi^*f$ is locally a monomial. 
It is readily checked that 
$f(1/f)=1$ and $f\dbar(1/f)=0$.
 The current $1/f$ is well-defined even if $f$ is a holomorphic section
of a Hermitian line bundle over $Y$, since $a(1/af)=1/f$ if $a$ is holomorphic and
nonvanishing.   

\begin{ex}\label{brus} 
In one complex variable it is quite elementary to see that the principal value
current $1/s^{m+1}$ exists and that 
$$
\dbar\frac{1}{s^{m+1}}\w ds.\xi=\frac{2\pi i}{m!}\frac{\partial^m}{\partial s^m}\xi(0),
$$
for test functions $\xi$.
\end{ex}

The sheaf $\PM=\PM_Y$ of {\it pseudomeromorphic currents}, introduced in \cite{AW2,AS},
consists of currents on $Y$ that are finite sums of direct images under (compositions of)
modifications, simple projections and open inclusions of 
currents of the form 
$$%% 
\frac{\xi}{s_1^{\alpha_1}\cdots s_{\ell-1}^{\alpha_{\ell-1}}}\w
\dbar\frac{1}{s_\ell^{\alpha_\ell}}\w\ldots\w \dbar\frac{1}{s_m^{\alpha_m}},
\quad m\le n,
$$%% 
in some $\C^m_s$ and $\xi$ is a smooth form with compact support.

The sheaf $\PM$ is closed under $\dbar$ (and $\partial$) and multiplication by smooth forms.
If $\tau$ is in $\PM$ and has support on an analytic subset $V\subset Y$ and $\eta$ is
a holomorphic form that vanishes on $V$, then 
\begin{equation}\label{mellom}
\overline{\eta}\w\tau=0,\quad 
d\bar\eta\w\tau=0. 
\end{equation}
The first equality roughly speaking means that  $\tau$ does not involve 
anti-holomorphic derivatives.
By a standard argument the second equality in \eqref{mellom}  implies:

\smallskip
\noindent {\it Dimension principle:  If $\tau$ is a pseudomeromorphic current
on $Y$ of bidegree $(*,p)$ that has support on an analytic subset
$V$ of codimension $>p$, then $\tau=0$.}

\smallskip
Let $\U\subset Y$ be an open subset. If $\tau$ is in $\PM(\U)$ and $V\subset \U$
 is an analytic subvariety,  then the natural restriction
of $\tau$ to the open set $\U\setminus V$ has a canonical extension
as a principal value to a pseudomeromorphic current  ${\bf 1}_{\U\setminus V}\tau$ on $\U$. 
If  $h$ is  a holomorphic tuple in $\U$ with common zero set
$V$, and $\chi$ is a smooth approximand $\chi$ of the characteristic function
of  the interval $[1,\infty)$, then
\begin{equation}\label{smorgas}
{\bf 1}_{\U\setminus V}\tau=\lim_{\epsilon\to 0}\chi(|h|^2/\epsilon)\tau.
\end{equation}
It follows that  ${\bf 1}_{V}\tau:=\tau-{\bf 1}_{\U\setminus V}\tau$ is
pseudomeromorphic in $\U$ and has  support on $V$. 
Notice that if $\alpha$ is a smooth form, then
${\bf 1}_{V}\alpha\w\tau=\alpha\w{\bf 1}_{V}\tau.$
Moreover, 
if $\pi\colon \widetilde \U\to \U$ is a modification,  $\tilde\tau$ is in $\PM(\widetilde \U)$,
and
$\tau=\pi_*\tilde\tau$, then
$$%%\begin{equation}\label{enkelstjarna}
{\bf 1}_V\tau=\pi_*\big({\bf 1}_{\pi^{-1}V}\tilde\tau\big)
$$%%\end{equation}
for any analytic set $V\subset \U$.
For any analytic sets $W, W'\subset\U$,
$$%%\begin{equation}\label{dubbelstjarna}
{\bf 1}_W{\bf 1}_{W'}\tau={\bf 1}_{W\cap W'}\tau.
$$%%\end{equation}

%%\smallskip

Let $Z\subset Y$ be an analytic subset of pure codimension $p$ and 
let $\tau$ be a \pmm current of bidegree $(N,*)$ with support on $Z$.
We say that $\tau$ has the {\it standard extension property}, SEP,
with respect to $Z$ if $\1_V\tau=0$ for each subvariety $V\subset Z\cap\U$ of positive
codimension, where $\U\subset Y$ is some open subset. 
The sheaf of such currents is denoted by 
$\W^Z$.  If $Z=Y$
we write $\W$ rather than  $\W^Y$.  
The subsheaf of $\W^Z$ of $\dbar$-closed currents of bidegree $(N,p)$
is called the sheaf of Coleff-Herrera currents\footnote{We adopt
here the convention from \cite{Bj}; in, e.g.,  \cite{Sz} these currents have
bidegree $(0,p)$.}, $\CH^Z$, on  $Z$.

\begin{remark}  The sheaf $\CH^Z$ was introduced by Bj\"ork, in a slightly different way.
For the equivalence to the definition given here,  see \cite[Section~5]{A9}.
\end{remark}

\begin{ex} Let $[Z]$ be the Lelong current associated with $Z$ and let $\beta$ be a
smooth form of bidegree $(p,*)$. Then $\mu=\beta\w[Z]$ is in $\W^Z$. If $\beta$ is holomorphic,
then $\mu$ is in $\CH^Z$. See, e.g., \cite[Example~4.2]{A9}.
\end{ex}

\begin{prop}\label{spadtag} 
If $\L$ is a holomorphic differential operator and  $\tau$ is in $\W^Z$,
then $\xi\mapsto\tau. \L\xi$ defines a current in $\W^Z$.
\end{prop}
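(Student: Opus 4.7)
The plan is to verify the defining properties of $\W^Z$---pseudomeromorphy, support on $Z$ with bidegree $(N,*)$, and SEP with respect to $Z$---for the current $\L^t\tau$ defined by $\xi\mapsto\tau.\L\xi$. Since the statement is local, I would fix a coordinate chart and write $\L=\sum_\alpha a_\alpha(z)\partial^\alpha$ with holomorphic $a_\alpha$ and $\partial^\alpha=(\partial/\partial z_1)^{\alpha_1}\cdots(\partial/\partial z_N)^{\alpha_N}$. Integration by parts against test forms identifies $\L^t\tau$ with
\[
\sum_\alpha(-1)^{|\alpha|}\,\partial^\alpha(a_\alpha\tau),
\]
where $\partial^\alpha$ now denotes holomorphic partial differentiation of currents. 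By linearity and induction on the order of $\L$, it then suffices to show that $\W^Z$ is preserved by (i) multiplication by a single holomorphic function, and (ii) a single holomorphic partial derivative $\partial/\partial z_i$.

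Part (i) is immediate: if $a$ is holomorphic, then $a\tau\in\PM$ since $\PM$ is closed under multiplication by smooth forms, the support and bidegree are preserved, and SEP follows from $\1_V(a\tau)=a\1_V\tau=0$ for $V\subset Z$ of positive codimension, using that the regularization \eqref{smorgas} commutes with multiplication by smooth forms. For part (ii), I would strip off the top holomorphic form, writing $\tau=\omega\w dz$ with $dz=dz_1\w\cdots\w dz_N$ and $\omega$ the $(0,q)$-current obtained by contracting $\tau$ against the commuting family $\partial/\partial z_1,\ldots,\partial/\partial z_N$. Since $\PM$ is closed under interior multiplication by smooth vector fields and under $\partial$, the identity $\partial_{z_i}\omega=(\partial/\partial z_i)\imult\partial\omega$ gives $\partial_{z_i}\omega\in\PM$, whence $\partial_{z_i}\tau=(\partial_{z_i}\omega)\w dz\in\PM$ with support and bidegree preserved. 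For SEP, let $V\subsetneq Z$ have positive codimension in $Z$, so $\codim V>p$ in the ambient manifold. In the Coleff-Herrera bidegree $(N,p)$, the current $\1_V(\partial_{z_i}\tau)$ has bidegree $(*,p)$ with support of codimension $>p$, and so vanishes by the dimension principle. For general bidegree $(N,q)$ with $q>p$, I would regularize using $\chi_\epsilon:=\chi(|h|^2/\epsilon)$ for a holomorphic tuple $h$ cutting out $V$ and apply
\[
\partial_{z_i}(\chi_\epsilon\tau)=(\partial_{z_i}\chi_\epsilon)\tau+\chi_\epsilon\,\partial_{z_i}\tau;
\]
in the limit $\epsilon\to 0$, SEP of $\tau$ gives $\chi_\epsilon\tau\to\tau$, while $\chi_\epsilon\partial_{z_i}\tau\to\1_{Y\setminus V}(\partial_{z_i}\tau)$ by definition, so the identity reduces the desired SEP $\1_V(\partial_{z_i}\tau)=0$ to the vanishing of the boundary contribution $\lim_\epsilon(\partial_{z_i}\chi_\epsilon)\tau$.

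The hardest point will be this vanishing. The factor $\partial_{z_i}\chi_\epsilon=\epsilon^{-1}\chi'(|h|^2/\epsilon)\sum_j\bar h_j(\partial_{z_i}h_j)$ is singular of order $\epsilon^{-1/2}$ on the shell $\{|h|^2\sim\epsilon\}$, but one expects that the antiholomorphic factors $\bar h_j$, together with SEP of $\tau$ (which ensures $\tau$ carries no mass on $V$), absorb the singularity. To make this precise I would pull back to a log-resolution $\pi\colon\widetilde Y\to Y$ of $V$ on which $\pi^*h$ is a monomial, reducing to a one-dimensional estimate in the spirit of Example~\ref{brus}, which yields $\lim_\epsilon(\partial_{z_i}\chi_\epsilon)\tau=0$ and hence $\L^t\tau\in\W^Z$.
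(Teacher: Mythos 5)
Your reduction to a single holomorphic partial derivative is the same first step as the paper's, and your identification of $\tau.(\partial/\partial z_i)\xi$ with (the adjoint of) the partial derivative acting on $\tau$ matches the paper's use of the Lie derivative along $\partial/\partial\zeta_1$. Where the two arguments diverge is decisive: the paper at this point simply invokes \cite[Theorem~3.7]{AW3}, which states precisely that the Lie derivative of a $\W^Z$-current along a holomorphic vector field is again in $\W^Z$. You instead try to re-derive this from the definitions, and that attempt has a genuine gap.

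The gap is exactly where you flag it. Pseudomeromorphy of $\partial_{z_i}\tau$, the support statement, and the SEP in bidegree $(N,p)$ (via the dimension principle) are all fine. But the SEP for $(N,q)$ with $q>p$ is the whole content of the matter, and reducing it to $\lim_\epsilon(\partial_{z_i}\chi_\epsilon)\tau=0$ is a reformulation, not a proof. The closing sentence --- pull back to a log-resolution of $V$, ``reduce to a one-dimensional estimate in the spirit of Example~\ref{brus}'' --- is not an argument: on the resolution you must control the interaction between the pushed-forward derivative, the pulled-back cut-off $\pi^*\chi_\epsilon$, and the local structure of the (direct-image representative of the) current $\tau$, none of which is a one-variable computation. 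There is also a subtlety you do not address: a log-resolution adapted to $h$ (hence to $V$) has no reason to be adapted to the singularities of $\tau$ itself, so one cannot in general assume that $\pi^*\tau$ has a tractable monomial form on the same resolution. This is precisely the kind of bookkeeping that the cited theorem in \cite{AW3} is designed to take care of, and your sketch leaves it unresolved. In short: your route is the one the paper takes, but where the paper cites a nontrivial external theorem, you leave an unproved claim in its place.
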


\begin{proof} It is a local statement so by induction it is enough to 
let $\L$ be a partial derivative $\partial/\partial \zeta_1$ with respect to some
local coordinate system. Let $L$ denote the Lie derivative with respect to 
this vector field. Since $\xi$ has bidegree $(0,*)$, $(\partial/\partial \zeta_1)\xi=L\xi$.
Thus
$$
\tau.(\partial/\partial \zeta_1)\xi=\tau. L\xi=\pm L\tau.\xi,
$$
and $L\tau$ is in $\W^Z$ according to \cite[Theorem~3.7]{AW3}.  
\end{proof}

\subsection{Almost semi-meromorphic currents}\label{prutman}
%%L 
We say that a current $b$ on a  smooth manifold $Y$ is {\it almost semi-meromorphic},
$b\in ASM(Y)$,  if there is a modification $\pi\colon Y'\to Y$, 
a holomorphic generically non-vanishing 
section $\sigma$ of a line bundle
$L\to Y'$ and an $L$-valued smooth form $\omega$ such that 
\begin{equation}\label{oskar}
b=\pi_*\frac{\omega}{\sigma},
\end{equation}
where $\omega/\sigma$ denotes the principal value current.  
This class of currents was introduced in \cite{AS} and studied in more detail in \cite{AW3}.
All results in this subsection can be found in the latter reference.

Let $ZSS(b)$, the Zariski singular support of $b$,
be the smallest analytic set such that $b$ is smooth in the complement. 

We will need the following results.

\begin{prop}[\cite{AW3}, Theorem~4.26]\label{jumper1}
If $b$ is almost semi-meromorphic on $Y$ and $\L$ is a holomorphic differential operator,
then $\L b$ is almost semi-meromorphic as well.
\end{prop}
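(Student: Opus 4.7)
The plan is to reduce to a local calculation after a further modification. Since the statement is local in $Y$, I would fix local coordinates $\zeta_1,\ldots,\zeta_N$ and write $\L=\sum_\beta g_\beta\,\partial^\beta$ with $g_\beta$ holomorphic. Because $ASM$ is closed under multiplication by smooth forms (in particular by holomorphic functions), induction on the order of $\L$ reduces the task to the single case $\L=\partial/\partial\zeta_j$ for some $j$.

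Write $b=\pi_*(\omega/\sigma)$ as in the definition, with $\pi\colon Y'\to Y$ a modification. By a Hironaka-type argument, I would choose a further modification $\mu\colon\widetilde Y\to Y'$ such that in suitable local coordinates $t=(t_1,\ldots,t_N)$ on $\widetilde Y$ the Jacobian of $\tilde\pi:=\pi\circ\mu$ equals a monomial $t^\delta$ times a unit and, simultaneously, $\tilde\pi^*\sigma$ is a monomial $t^\alpha$ times a non-vanishing section of $\tilde\pi^*L$. By the chain rule and Cramer's rule, the vector field $V=\partial/\partial\zeta_j$ then pulls back on the complement of the exceptional locus to a meromorphic vector field that extends to all of $\widetilde Y$ as
$$
\tilde\pi^* V=\frac{1}{t^\delta}\sum_{k=1}^N A_k(t)\,\frac{\partial}{\partial t_k},
$$
with $A_k$ holomorphic.

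The key computation is the principal-value Leibniz identity: for a smooth form $\tilde\omega$ and a monomial $t^\alpha$,
$$
\frac{\partial}{\partial t_k}\!\left(\frac{\tilde\omega}{t^\alpha}\right)
=\frac{t_k\,\partial\tilde\omega/\partial t_k-\alpha_k\,\tilde\omega}{t^{\alpha+e_k}},
$$
whose right-hand side is a smooth form divided by a monomial. Applying this componentwise to $\tilde\pi^*\omega/\tilde\pi^*\sigma$ and then multiplying by $1/t^\delta$ produces an $ASM$ current $w$ on $\widetilde Y$ (absorbing the monomial $t^\delta$ into the denominator line bundle via a further blow-up if necessary). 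Since push-forward along a modification preserves $ASM$, the current $\tilde\pi_* w$ is $ASM$ on $Y$. To conclude, one checks that $\tilde\pi_* w=\L b$: on the Zariski open set where $\tilde\pi$ is a biholomorphism this is the ordinary chain rule applied to smooth forms, and since both sides are pseudomeromorphic on $Y$, their difference is supported on a thin analytic set and has the correct bidegree, so the dimension principle forces it to vanish.

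The main obstacle, as I see it, is justifying this last identification across the exceptional locus: a priori, differentiating through the modification could produce residual $\dbar$-type contributions supported on the exceptional divisor. What prevents this is that $V$ is a purely holomorphic vector field and $b\in ASM$ has only principal-value singularities, no residue part. At the level of the elementary identity $\sigma\cdot(1/\sigma)=1$, differentiation in the principal-value sense gives $\partial(1/\sigma)/\partial\zeta_j=-(\partial\sigma/\partial\zeta_j)/\sigma^2$ with no $\dbar$-term, and iterating this through the two-step modification yields the claim. Once that identification is in place, the conclusion that $\L b\in ASM(Y)$ is immediate from the construction of $w$.
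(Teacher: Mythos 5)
This proposition is cited from \cite{AW3} and no proof appears in the present paper, so I can only assess your argument on its own merits.

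Your strategy is the right one: reduce to a single partial derivative, pass to a further modification $\widetilde Y$ on which both $\tilde\pi^*\sigma$ and the Jacobian of $\tilde\pi$ are monomial, differentiate the monomial principal value on $\widetilde Y$ via the Leibniz identity, and push forward. The reductions in the first paragraph are sound, the Cramer's rule expression for $\tilde\pi^*(\partial/\partial\zeta_j)$ is correct, and the Leibniz identity for $\partial_{t_k}(\tilde\omega/t^\alpha)$ is indeed the crux of the matter.

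The gap is in the final identification $\tilde\pi_*w=\L b$. You argue that the difference is a pseudomeromorphic current supported on a thin analytic set and ``has the correct bidegree, so the dimension principle forces it to vanish.'' This is not enough: the difference is supported on $ZSS(b)$, which can be a hypersurface (codimension $1$), while $b$ may have anti-holomorphic bidegree $q\ge 1$; for instance $U^a_k$ for $k>\codim Z_a$, or $R^X_k$ for $k>p$, are exactly of this type. The dimension principle requires the support to have codimension $>q$, which fails here, so this step does not close. You do correctly identify the real obstruction (``differentiating through the modification could produce residual $\dbar$-type contributions'') and state the right heuristic (holomorphic derivatives of principal values carry no residue term), but you then leave it at the heuristic. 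To make this rigorous you should prove the identity $\langle\L b,\xi\rangle=\langle\tilde\pi_*w,\xi\rangle$ directly, by writing $\langle\L b,\xi\rangle=\pm\langle\tilde\pi^*(\omega/\sigma),\tilde\pi^*(\L^*\xi)\rangle$, unwinding $\tilde\pi^*\L^*$ by the chain rule into a sum of $(A_{jk}/J)\,\partial/\partial t_k$ acting on $\tilde\pi^*\xi$, and then integrating by parts on $\widetilde Y$: the key input is precisely that in $\C^N$, the distributional holomorphic derivative of a monomial principal value equals the naive derivative (no residue term arises), which can be seen by the regularization $\chi(|t|^2/\epsilon)$ and the angular cancellation in the limit. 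In other words, your Leibniz identity needs to be verified as a \emph{distributional} identity on $\widetilde Y$, not merely as a pointwise one off the exceptional divisor, and then the push-forward of that distributional identity gives $\tilde\pi_*w=\L b$ directly, bypassing any dimension-principle argument.
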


Clearly,  $ZSS(\L b)\subset ZSS(b)$.  

\begin{thm}[\cite{AW3}, Theorem~4.8]\label{jumper2} If $b\in ASM(Y)$ and
$\tau$  is any \pmm current in $Y$, then there is a unique current
$T$ in $Y$ that coincides with $b\w \tau$ outside $ZSS(b)$ and such that
$\1_{ZSS(b)} T=0$.
\end{thm}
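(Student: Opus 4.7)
The plan is standard for this circle of ideas: existence via a cutoff regularization and uniqueness via a support argument for pseudomeromorphic currents. Set $V:=ZSS(b)$. For the uniqueness, suppose two pseudomeromorphic currents $T_1$ and $T_2$ both coincide with $b\wedge\tau$ on $Y\setminus V$ and satisfy $\1_V T_i=0$. Then $T_1-T_2$ is pseudomeromorphic and vanishes on $Y\setminus V$, so it is supported in $V$; hence $T_1-T_2=\1_V(T_1-T_2)=0$.

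For existence, work locally and choose a holomorphic tuple $h$ cutting out $V$. Since $b$ is smooth on $Y\setminus V$ and $\chi(|h|^2/\epsilon)$ vanishes in a neighbourhood of $V$, the form $\chi(|h|^2/\epsilon)\,b$ is smooth on $Y$, so
$$
T_\epsilon:=\chi(|h|^2/\epsilon)\,b\wedge\tau
$$
is a well-defined pseudomeromorphic current, and I would set $T:=\lim_{\epsilon\to 0}T_\epsilon$. To prove that the limit exists as a pseudomeromorphic current one passes to a common resolution: take $\pi\colon Y'\to Y$ with $b=\pi_*(\omega/\sigma)$ from \eqref{oskar}, and, using that $\tau$ is locally a finite sum of direct images $p_*\tilde\tau$ of model currents under modifications, invoke Hironaka to find a modification $\mu\colon Y''\to Y$ dominating both $\pi$ and each $p$, on which the pullbacks of $\sigma$ and of $h$ as well as the monomial denominators appearing in $\tilde\tau$ are simultaneously normal-crossings monomials. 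Upstairs, $T_\epsilon$ is the direct image under $\mu$ of a product of a smooth form, cutoffs $\chi(|g|^2/\epsilon)$ with $g$ monomial, and elementary principal-value/residue factors $1/s_j^{\alpha_j}$ and $\dbar(1/s_k^{\alpha_k})$. Convergence and pseudomeromorphicity of the limit then follow slot by slot from the one-variable computation of Example~\ref{brus} and the canonical extension formula \eqref{smorgas}.

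It remains to verify the two defining properties of $T$. On $Y\setminus V$ the cutoffs tend to $1$ smoothly, so $T$ coincides there with $b\wedge\tau$. For $\1_V T=0$, I apply \eqref{smorgas} to $T$ with the same tuple $h$: this yields $\1_{Y\setminus V}T=\lim_{\epsilon\to 0}\chi(|h|^2/\epsilon)T$; the construction of $T$ as the cutoff limit shows that this quantity equals $T$ itself, whence $\1_V T=T-\1_{Y\setminus V}T=0$. The main technical obstacle is precisely this convergence step: checking that the two independent cutoff regularizations, namely $\chi(|h|^2/\epsilon)$ and the ones implicit in the construction of the monomial principal-value factors on $Y''$, commute in the limit. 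This is exactly what the foundational results on pseudomeromorphic currents in \cite{AW3} and \cite{AS} deliver; once it is in place, the rest of the argument is formal.
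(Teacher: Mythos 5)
Your proof is essentially correct and follows the standard route — and almost certainly the route taken in \cite{AW3}, which the paper merely cites for this statement without reproducing a proof: uniqueness from the support argument for the difference plus $\1_{ZSS(b)}$-additivity, existence via the cutoff limit $\lim_\epsilon \chi(|h|^2/\epsilon)\,b\w\tau$ with convergence checked on a common log resolution, and the verification that $\1_{ZSS(b)}T=0$ by commuting the two cutoff limits, which works because for $\epsilon\ll\delta$ one has $\chi_\delta\chi_\epsilon=\chi_\delta$ exactly. The one place that is legitimately deferred is showing that the cutoff limit exists, is pseudomeromorphic, and is independent of the choice of $h$ and $\chi$ (without which \eqref{smorgas} cannot be invoked for $T$), but you correctly identify this as the technical core and attribute it to the structure theory of pseudomeromorphic and almost semi-meromorphic currents developed in \cite{AS} and \cite{AW3}.
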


We will denote the extension $T$ by $b\w\tau$ as well. 
It follows from \eqref{smorgas} that
\begin{equation}\label{blus2}
b\w\tau=\lim_\delta \chi_\delta b\w\tau
\end{equation}
if  $\chi_\delta=\chi(|g|^2/\delta)$ where $g$ is a holomorphic tuple whose zero set
is precisely $ZSS(b)$.
It is  not hard to check, cf.,  \cite[Proposition 4.9]{AW3}, that if $V$ is any analytic set, then
\begin{equation}\label{jumper3}
\1_V (b\w \tau)=b\w\1_V\tau.
\end{equation}
It follows from \eqref{jumper3} that $b\in ASM(Y)$ induces a mapping
$$
\W^Z\to\W^Z, \quad \tau\mapsto b\w\tau.
$$
Given $a\in ASM(Y)$ and $\tau\in\PM^Y$ we define
$$
\dbar a\w\tau:= \dbar (a\w\tau)-(-1)^{\deg a} a\w\dbar\tau
$$
The definition is made so that the formal Leibniz rule holds. 

\begin{remark}
Clearly $\dbar a= b+ r(a)$ where
$b=\1_{X\setminus ZSS(a)}\dbar a$ and $r(a)$, the residue of $a$, has support on $ZSS(a)$.  One can check, cf., \cite[Proposition 416]{AW3},  that in fact $b\in ASM(X)$.  Thus we can define
$r(a)\w\tau:=\dbar a\w\tau-b\w a.$
If $\chi_\delta$ is as above,  then
\begin{equation}\label{hoppsan}
r(a)\w\tau=\lim_\delta \dbar\chi_\delta \w a\w \tau.
%\     
\end{equation}
\end{remark}

If $a$ is holomorphic outside $ZSS(a)$, then clearly
the support of $\dbar a\w\tau$ is contained in $\supp \tau\cap ZSS(a)$.
In particular, if $\gamma_1,\ldots,\gamma_p$ are holomorphic functions, 
 then by induction we can form the current
\begin{equation}\label{chprodukt}
\dbar\frac{1}{\gamma_p}\w\cdots\w\dbar\frac{1}{\gamma_1}.
\end{equation}
%%where $E$ is trivial of rank $p$.
Clearly it is $\dbar$-closed and has support on $Z_\gamma=\{\gamma_1=\cdots=\gamma_p=0\}$. 
If in addition   
$Z_\gamma$ has codimension $p$, then \eqref{chprodukt}  is
anti-commuting in its factors, see, e.g., \cite[Section 2]{AW2}.
In this case we call it the {\it Coleff-Herrera product} $\mu^\gamma$ formed by the $\gamma_j$.  It is well-known, and was first proved by Dickenstein-Sessa and Passare,
that the annihilator
$\ann \mu^\gamma=\{\phi\in\Ok;\ \phi \mu^\gamma=0\}$ is
precisely equal to the ideal $(\gamma)$ generated by $\gamma_1, \ldots, \gamma_p$, see, \cite[Eq.\ (4.3)]{A9} for the setting used here.   
It follows by the dimension principle that $\mu^\gamma$ is in $\W^{Z_\gamma}$.  
 If $\omega$ is a holomorphic  $(N,0)$-form, therefore
$\mu^\gamma\w \omega$ is in $\CH^{Z_\gamma}$. 

Any  Coleff-Herrera current $\mu$ can be written locally as
$\mu=a\mu^\gamma\w\omega$ for such a tuple $\gamma$ and some holomorphic  function $a$,  see, e.g., \cite[Theorem 1.1]{A9}.
Thus the annihilator  
$\ann\mu$ is the kernel of the sheaf mapping $\Ok\to \Ok/(\gamma), \ \phi\mapsto a\phi$,
and hence $\ann\mu$ is coherent.

\smallskip
Let $S\to Y$ be a vector bundle. We say that $b\in ASM(Y,S)$ 
if there is a representation \eqref{oskar}, where $\omega$ is a smooth section of
$L\otimes\pi^* S$. 
%%Since $\alpha \pi_* \tau=\pi_*(\pi^*\alpha\w \tau)$ for any smooth
%%function (or form) $\alpha$, it follows that $b\in ASM(Y,S)$ is an
%%ordinary almost semi-meromorphic form in each local trivialization of $S$.
The statements above have analogues for $S$-valued sections.  For instance,
if $S$ is a line bundle and $\gamma_j\in   ASM(Y,S)$,   then \eqref{chprodukt}  is 
an $S^{-p}$-valued current.

\section{Global Coleff-Herrera currents on $\Pk^N$}\label{svart2}

Let $\delta_x$ be interior multiplication by the vector field 
$$
\sum_1^N x_j\frac{\partial}{\partial x_j}
$$
on  $\C^{N+1}$
and recall that a differential form  $\xi$ on $\C^{N+1}\setminus\{0\}$ is projective, i.e., 
the pullback of a form
on $\P^N$, if and only if $\delta_x\xi=\delta_{\bar x}\xi=0$, where $\delta_{\bar x}$ is the 
conjugate of $\delta_x$. We will identify forms on $\P^N$ and projective forms. 
Notice that 
$$
\Omega=\delta_x(dx_0\w\ldots\w dx_N)
$$  
is a non-vanishing section of the trivial bundle over $\P^N$, realized as
a $(N,0)$-form on $\Pk^N$ with values in $\Ok(N+1)$.

\smallskip
Let $\gamma_1,\ldots, \gamma_p$ be holomorphic sections of $\Ok(r)$ such that their common zero set
$Z_\gamma$ has codimension $p$.  Then, cf., Section~\ref{prutman} above, 
\begin{equation}\label{chp}
\mu^\gamma\w\Omega=\dbar\frac{1}{\gamma_p}\w\cdots\w\dbar\frac{1}{\gamma_1}\w\Omega
\end{equation}
is a global section of $\CH^{Z_\gamma}\otimes \Ok(-pr+N+1)$.

\begin{lma}\label{skola} 
Let $Z\subset Z_\gamma$ be a reduced projective variety of pure codimension
$p$ and let $\mu$ be a global section of $\CH^Z\otimes\Ok(\ell+N+1)$ such that
 \begin{equation}\label{brum}
\gamma_1\mu=\cdots=\gamma_p\mu=0.
\end{equation}
If $p\le  N-1$, then there is a global  holomorphic section $a$ of
$\Ok(\ell+pr)$ such that 
\begin{equation}\label{snart}
\mu=a\dbar\frac{1}{\gamma_p}\w\cdots\w\dbar\frac{1}{\gamma_1}\w\Omega.
\end{equation}
If $p=N$ and $\ell+N\ge 0$,  then the same conclusion holds.
\end{lma}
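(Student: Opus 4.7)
The plan is to construct the global section $a$ by piecing together local representations of $\mu$ in the form \eqref{snart} and controlling the obstruction to globalization as an $H^1$ class of a line-bundle-twisted ideal sheaf on $\Pk^N$. I would proceed in three steps: local existence of holomorphic functions $a_i$, descent to a global section of a quotient sheaf, and lifting through cohomology vanishing on $\Pk^N$.

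First, on any sufficiently small open $U_i$ of a trivializing cover of $\Pk^N$, the tuple $(\gamma_1,\ldots,\gamma_p)$ is a regular sequence since $Z_\gamma$ has codimension $p$. By local duality for the complete intersection $Z_\gamma$ --- equivalently, the local structure theorem for $\CH^{Z_\gamma}$ recalled in Section~\ref{prutman}, together with $\ann\mu^\gamma=(\gamma)$ --- the hypothesis $\gamma_j\mu=0$ yields a local holomorphic $a_i$ with $\mu=a_i\,\mu^\gamma\w\Omega$ on $U_i$, uniquely determined modulo the ideal $(\gamma_1,\ldots,\gamma_p)$. Since $\dbar(1/\gamma_j)$ is $\Ok(-r)$-valued and $\Omega$ is $\Ok(N+1)$-valued, a degree count shows that each $a_i$ represents a section of $\Ok(\ell+pr)|_{U_i}$.

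On overlaps, the vanishing $(a_i-a_j)\mu^\gamma\w\Omega=0$ combined with $\ann\mu^\gamma=(\gamma)$ forces $a_i-a_j\in\mathcal{K}(\ell+pr)(U_i\cap U_j)$, where $\mathcal{K}\subset\Ok$ denotes the ideal sheaf locally generated by $\gamma_1,\ldots,\gamma_p$. The $a_i$ therefore descend to a well-defined global section $\bar a\in H^0(\Pk^N,(\Ok/\mathcal{K})(\ell+pr))$, and lifting $\bar a$ to a global $a\in H^0(\Pk^N,\Ok(\ell+pr))$ reduces, via the short exact sequence
$$0\to \mathcal{K}(\ell+pr)\to \Ok(\ell+pr)\to (\Ok/\mathcal{K})(\ell+pr)\to 0,$$
to the vanishing of $H^1(\Pk^N,\mathcal{K}(\ell+pr))$. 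Once such an $a$ is produced, the identity \eqref{snart} holds on each $U_i$ and therefore globally on $\Pk^N$.

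The main obstacle is this cohomology vanishing. Since $(\gamma_1,\ldots,\gamma_p)$ is a regular sequence of sections of $\Ok(r)$, the Koszul complex furnishes a resolution
$$0\to \Ok(-pr)\to \cdots \to \Ok(-r)^{\oplus p}\to \mathcal{K}\to 0$$
whose $j$-th term is a direct sum of copies of $\Ok(-jr)$. Twisting by $\Ok(\ell+pr)$, splitting into short exact sequences with intermediate kernels $L_i$ (so that $L_0=\mathcal{K}(\ell+pr)$ and $L_{p-1}=\Ok(\ell)$), and chasing the long exact cohomology sequences by downward induction on $i$, the desired vanishing of $H^1(L_0)$ follows from the standard line-bundle vanishing $H^i(\Pk^N,\Ok(d))=0$ for $0<i<N$, together with the single condition $H^p(\Pk^N,\Ok(\ell))=0$. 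This latter condition is automatic when $p\le N-1$, and for $p=N$ is equivalent (by Serre duality) to $\ell\ge -N$, i.e., $\ell+N\ge 0$ --- matching exactly the two cases in the statement.
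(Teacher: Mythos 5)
Your proof is correct, and it takes a genuinely different route from the paper's. The paper works directly in the $\delta_\gamma$--$\dbar$ double complex: it successively solves global $\dbar$-equations $\dbar w_{p-1}=\mu\w e,\ \dbar w_{p-2}=\delta_\gamma w_{p-1},\ \ldots$, obtains $a\w\Omega=\delta_\gamma w_0$ as a global holomorphic form, and then shows $\mu-a\mu^\gamma\w\Omega=(\delta_\gamma-\dbar)(aU\w\Omega-w)$, whence it vanishes because a Coleff--Herrera current that is $(\delta_\gamma-\dbar)$-exact is zero. You instead separate the argument into a purely local step (local duality produces $a_i$ on each $U_i$) and a purely global step (the $a_i$ glue to a section of $(\Ok/\mathcal{K})(\ell+pr)$, and the obstruction to lifting lives in $H^1(\Pk^N,\mathcal{K}(\ell+pr))$, killed via the twisted Koszul resolution). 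Both routes ultimately rest on the same Bott vanishing $H^k(\Pk^N,\Ok(d))=0$ for $0<k<N$ and the Serre-dual criterion at $k=N$ --- the paper's chain of $\dbar$-equations uses exactly the same twists that appear as the graded pieces of your Koszul complex --- so the cohomological content is identical; the difference is one of packaging (direct construction vs.\ \v{C}ech obstruction). Your version has the advantage of making the two cases $p\le N-1$ and $p=N,\ \ell+N\ge 0$ fall out transparently from the single vanishing $H^p(\Pk^N,\Ok(\ell))=0$, whereas in the paper it appears in the first $\dbar$-equation.

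One point deserves a flag. The local input you invoke --- that a Coleff--Herrera current $\mu$ on $Z\subset Z_\gamma$ with $\gamma_j\mu=0$ is locally of the form $a\,\mu^\gamma\w\Omega$ for \emph{this given} regular sequence $\gamma$ --- is a refinement of the structure statement quoted in Section~2.4, which only asserts that a tuple $\gamma$ adapted to $\mu$ can be \emph{found}; it does not literally assert the cyclicity of $\{\tau\in\CH^{Z_\gamma}_x:\gamma_j\tau=0\}$ generated by $\mu^\gamma\w\Omega$. That refined statement is true and is a standard form of local duality for the complete intersection (it is, in effect, the local version of the very $\delta_\gamma$--$\dbar$ argument the paper runs globally), but the short appeal to ``$\ann\mu^\gamma=(\gamma)$'' alone does not prove it --- that equality gives the injectivity of $a\mapsto a\mu^\gamma\w\Omega$ modulo $(\gamma)$, not the surjectivity onto the $(\gamma)$-annihilated part of $\CH^{Z_\gamma}$. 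You should cite the local duality theorem explicitly (e.g.\ Bj\"ork's treatment or \cite[Theorem~1.1]{A9} in the form that covers a prescribed regular sequence annihilating $\mu$) rather than present it as an immediate consequence of the annihilator identity.
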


In particular we see that if $p\le N-1$ and $\ell +pr<0$, then $\mu=0$. 

\begin{proof}
Let us introduce a trivial vector bundle $E$ of rank $p$ with global holomorphic frame elements
$e_1,\ldots,e_p$ and let $e_1^*,\ldots,e_p^*$ be the dual frame for $E^*$. We then have  the mapping
interior multiplication $\delta_\gamma\colon \Lambda^{*+1}E\to \Lambda^* E$  by the
section $\gamma:=\gamma_1 e_1^*+\cdots +\gamma_p e_p^*$ of $E^*$.  We  consider the exterior
algebra of $E\oplus T^*\P^N$ so that $d\bar x_j\w e_j=-e_j\w d\bar x_j$ etc.  
Then both $\delta_\gamma$ and $\dbar$ extend to mappings on currents with values in $\Lambda E$,
and 
\begin{equation}\label{sprud}
\delta_\gamma \dbar=-\dbar\delta_\gamma.
\end{equation}  
Let $e=e_1\w\ldots\w e_p$.
Recall that
$H^{N,k}(\P^N,\Ok(\nu))=0$ if either $1\le k\le N-1$ or $k=N$ and $\nu\ge 1$;
see, e.g., \cite[Ch.~VII, Theorem~10.7]{Dem}.
If $p\le N-1$, or $\ell+N+1\ge 1$,   we can  therefore find a global solution
to  $\dbar w_{p-1}=\mu\w e$.  In view of \eqref{sprud} and \eqref{brum} we have that
$$
\dbar \delta_\gamma w_{p-1}=-\delta_\gamma \dbar w_{p-1}=-\delta_\gamma (\mu\w e)=0.
$$
Thus we can   successively solve
\begin{equation}\label{postum}
\dbar w_{p-1}=\mu\w e,\  \dbar w_{p-2}=\delta_\gamma w_{p-1},\ldots,\dbar w_0=\delta_\gamma w_1.
\end{equation}
Then $a\w\Omega:=\delta_\gamma w_0$ is a $\dbar$-closed, and thus a holomorphic,  $(N,0)$-form
with values in $\Ok(\ell+pr+N+1)$.  Altogether, %%  
$$
(\delta_\gamma-\dbar)w=a\w\Omega-\mu\w  e
$$
if $w=w_0+\cdots+w_{p-1}$. 
As in \cite[Examples~ 3.1 or 3.2]{A9} we can find a global current $U$ such that 
$$
(\delta_\gamma-\dbar)U=1-\mu^\gamma\w e.
$$
Thus 
$$
(\delta_\gamma-\dbar)(a U\w\Omega-w)=\mu-a\mu^\gamma\w\Omega.
$$
Since  the right hand side is in $\CH^Z$ it now follows from \cite[Theorem~3.3]{A9} 
that it must vanish.  
\end{proof}

\begin{ex}\label{ros}  
Given a global section $\mu$ of $\CH_Z\otimes\Ok(\ell)$
one can always find $\gamma_j$ such that \eqref{brum} holds. 
In fact, for a large enough $r_0$ there are sections $g_1',\ldots,g_m'$ 
of $\Ok(r_0)$ that generate the ideal sheaf  $\J_Z\subset\Ok_{\Pk^N}$.  
If $g_1,\ldots,g_p$ are generic linear combinations of the $g_j'$, then
$Z_g=\{g_1=\cdots =g_p=0\}$ has codimension $p$,  $Z_g\supset Z$, and 
(expressed in a local frame) $dg_1\w\ldots\w dg_p\neq 0$ on $Z_{reg}$.   
If $\gamma_j=g_j^{\m_j+1}$ and $\m_j$ are large enough, then \eqref{brum} holds.
\end{ex}

\section{Bj\" ork-type representation of global Coleff-Herrera currents}\label{pyts}
In this section we express the action $\mu.\xi$ if a global Coleff-Herrera current $\mu$ on a test form $\xi$ as an integral over $Z$ of $\M\xi$,
where $\M$ is a certain differential operator.
 
\smallskip
As usual we identify smooth sections $\psi$ of the line bundle $\Ok(\ell)$ by $\ell$-homogeneous
smooth functions on $\C^{N+1}\setminus\{0\}$. 
Notice that then each $\partial/\partial x_j$, $j=0,\ldots,N$,  induces a differential operator
$\Ok(\ell)\to \Ok(\ell-1)$.   We say that a finite sum
\begin{equation}
\L=\sum_{\alpha} v_\alpha \frac{\partial^\alpha}{\partial x^\alpha}
\end{equation} 
is a holomorphic differential operator on $\Pk^N$ of {\it degree} $r$ if the coefficients $v_{\alpha}$
are holomorphic sections of $\Ok(r+|\alpha|)$.  Such an $\L$ maps
$\Ok(\ell)\to \Ok(\ell+r)$ for each $\ell$.   
The {\it order} of $\L$ is the maximal occurring $|\alpha|$ as usual.

Consider the affinization $\C^N\simeq \{x_0\neq 0\}$. Notice that there is a one-to-one
correspondence between smooth sections of $\Ok(\ell)$ over $\C^N$ and smooth functions in $\C^N$,
via the frame $[x_0,\ldots,x_N]\mapsto x_0^\ell$ for $\Ok(\ell)$ over $\C^N$.  More concretely,
given the section $\phi$ one gets the associated function by just letting $x_0=1$. Conversely,
given $\Phi$, then $\phi(x)=x_0^\ell\Phi(x'/x_0)$.  In this way a differential operator
of degree $r$ gives rise to a differential operator 
$$
L=\sum_{|\alpha'|\le M} V_{\alpha'}(x')\frac{\partial^{\alpha'}}{\partial x^{\alpha'}}
$$
where $V_{\alpha'}(x')$ are polynomials of degree at most $r+|\alpha'|$.
Notice however, that the resulting affine $L$ will depend on $\ell$ unless $\L (x_0\phi)=x_0\L\phi$
for all $\phi$. For instance, 
%%Thus $\psi=x_0^\ell\Psi$, where $\Psi(x')=\phi(1,x'/x_0)$. 
%%it is easily verified that
the differential operator $\L=\partial/\partial x_0$, that has order $1$ and degree $-1$,
induces
$$
L=\ell  -\sum_1^N x_j\frac{\partial^j}{\partial x^j}.
$$
Notice that $\L$, as well as an associated affine differential operator $L$, 
act on smooth $(0,*)$-forms as well.

The following statement is a global version of a construction due to Bj\"ork,
\cite{Bj}. A similar result is obtained in \cite[Theorem~4.2]{VY}.

\begin{thm}\label{skohorn}
Assume that $Z\subset\Pk^N$ has pure codimension $p$,  that $\mu$
is a global section of  $\CH_Z\otimes\Ok(r)$, and assume that
$p\le N-1$ or $r+1\ge 0$. Let $\I=\ann\mu$.
There is a multiindex $\m=(\m_1,\ldots,\m_p)$, a number $\rho$,
and for each $\alpha\le\m$ there are 
holomorphic differential operators $\L_\alpha$ and $\M_{\m-\alpha}$,
such that $\deg \L_\alpha+\deg\M_{\m-\alpha}=\rho$,
and a global meromorphic $(n,0)$-form $\tau$ with values in $\Ok(-\rho)$, 
not identically polar on any irreducible 
component of $Z$, such that the following hold:

\smallskip

\noindent(i) For any global holomorphic section $\phi$ of $\Ok(\ell)$ and
any test form $\xi$ of bidegree $(0,n)$ with values in $\Ok(-r-\ell)$ we have
\begin{equation}\label{skaldjur}
\phi\mu.\xi=
\sum_{\alpha\le m}\int_Z \tau\w   \L_\alpha\phi\w \M_{\m-\alpha}\xi.
\end{equation}

\smallskip
\noindent (ii) 
For each point $x\in Z$,
a germ $\psi\in\Ok_x$ is in $\I_x$ if and only if
\begin{equation}\label{boras}
\L_\alpha\psi\in \sqrt{\I_x},\quad \alpha\le \m.
\end{equation}

\smallskip
\noindent (iii)
For each $\alpha\le\m$ there are holomorphic differential operators $\M_{\alpha,\gamma}$, 
$\gamma\le\alpha$, such that
\begin{equation}\label{ormvrak2}
\L_{\alpha}(\phi\psi)=\sum_{\gamma\le\alpha}\L_\gamma\phi\M_{\alpha,\gamma}\psi
\end{equation}
for  all holomorphic sections $\phi$ and $\psi$ of $\Ok(\ell)$ and $\Ok(\ell')$.
\end{thm}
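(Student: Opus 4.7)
\textbf{Proof plan for Theorem \ref{skohorn}.}

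The plan is to adapt Björk's local construction of Noetherian operators from Coleff-Herrera currents to the projective setting, exploiting that every piece of data lives globally on $\Pk^N$. I would first realize $\mu$ as a generalized Coleff-Herrera product via Lemma \ref{skola}, then carry out the classical transverse Taylor expansion pointwise on $Z_{reg}$, and finally reinterpret the local transverse derivatives as projective differential operators by clearing Jacobians into a single global meromorphic form $\tau$.

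Following Example \ref{ros}, I fix a sufficiently large $r_0$ and generic sections $g_1,\ldots,g_p$ of $\Ok(r_0)$ such that $\{g_1=\cdots=g_p=0\}\supset Z$ has codimension $p$ with $dg_1\wedge\cdots\wedge dg_p\ne 0$ on $Z_{reg}$. Coherence of $\ann\mu$ produces exponents $\m_j$ for which $\gamma_j:=g_j^{\m_j+1}$ annihilate $\mu$; the hypothesis $p\le N-1$ or $r+1\ge 0$ then allows me to apply Lemma \ref{skola} and obtain
\[
\mu=a\,\dbar\frac{1}{\gamma_p}\wedge\cdots\wedge\dbar\frac{1}{\gamma_1}\wedge\Omega
\]
for a global holomorphic section $a$ of a suitable $\Ok(*)$. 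Near any point of $Z_{reg}$ the $g_j$ extend to a local coordinate system $(g_1,\ldots,g_p,w_1,\ldots,w_n)$ and $\Omega=h\,dg_1\wedge\cdots\wedge dg_p\wedge dw$ for a local holomorphic $h$. Iterating the one-variable identity of Example \ref{brus} turns $\phi\mu.\xi$ into an integral over $Z$ of $ah/\m!$ times $\partial^{\m}(\phi\xi)/\partial g^{\m}|_{g=0}$; the Leibniz rule for this mixed derivative separates $\phi$ and $\xi$ and yields the sum in \eqref{skaldjur}.

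To globalize, observe that $\partial/\partial g_j$ is defined only modulo vector fields tangent to $Z$, which is irrelevant for the pairing since $\mu$ is supported on $Z$. By Cramer's rule, whenever a submatrix of $(\partial g_j/\partial x_k)$ has non-vanishing determinant $J$ on a given component of $Z_{reg}$, the operator $\partial/\partial g_j$ can be rewritten as $J^{-1}$ times a polynomial vector field on $\C^{N+1}$. Clearing denominators absorbs every factor of $J$ and of $h$ into a single global meromorphic $(n,0)$-form $\tau$ with values in $\Ok(-\rho)$, while the residual polynomial coefficients define projective differential operators $\L_\alpha$ and $\M_{\m-\alpha}$ on $\Pk^N$. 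Homogeneity bookkeeping across the Coleff-Herrera factors forces $\deg\L_\alpha+\deg\M_{\m-\alpha}$ to be a single constant $\rho$ independent of $\alpha$. The genericity of the $g_j$ and the purity of $Z$ ensure that the polar locus of $\tau$ meets each irreducible component of $Z$ in a proper subvariety.

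Part (ii) is then the local Björk theorem applied componentwise: $\psi\in\I_x$ iff $\psi\mu=0$ near $x$ iff all transverse Taylor coefficients $\partial^\alpha\psi/\partial g^\alpha$ of order $\alpha\le\m$ vanish on $Z_{red}$, which by the previous step is equivalent to $\L_\alpha\psi\in\sqrt{\I_x}$. Part (iii) is the ordinary Leibniz rule for $\partial^\alpha(\phi\psi)/\partial g^\alpha$ translated through the same globalization recipe. The main obstacle is the globalization step: a single global $\tau$ must simultaneously absorb the Jacobian denominators arising from every chart of $Z_{reg}$, and the residual polynomial operators must acquire matching degrees so that $\deg\L_\alpha+\deg\M_{\m-\alpha}=\rho$ for every $\alpha$. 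Controlling this via the genericity provided by Example \ref{ros} and a careful accounting of the line-bundle twists $\Ok(d)$ appearing at each Coleff-Herrera factor is where the real work lies.
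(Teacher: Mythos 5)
Your plan correctly identifies the Björk strategy — represent $\mu$ as $a\mu^{g^{\m+\1}}\wedge\Omega$ via Example~\ref{ros} and Lemma~\ref{skola}, expand transversally to $Z$ via Example~\ref{brus}, separate $\phi$ from $\xi$ with Leibniz, and then try to globalize. But you explicitly flag the globalization as ``where the real work lies'' and then do not do it, and that gap is precisely the content of the theorem. In particular, your mechanism of passing chart by chart on $Z_{reg}$ and rewriting $\partial/\partial g_j$ via Cramer's rule with ``whenever a submatrix of $(\partial g_j/\partial x_k)$ has non-vanishing determinant $J$ on a given component'' produces \emph{different} Jacobians $J$ on different pieces of $Z_{reg}$. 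There is no way to clear all of them into ``a single global meromorphic $(n,0)$-form $\tau$'' and a \emph{finite} set of global differential operators without a uniform choice. Nothing in your argument explains why such a uniform choice exists.

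The missing idea in the paper is to make exactly one such choice once and for all: after a projective transformation put all components of $Z$ in the affine chart $\C^N=\{x_0\neq 0\}$, and after a further generic linear change of $x'=(x_1,\dots,x_N)$ split it as $x'=(\zeta,\eta)$ with $\eta$ of length $p$ so that the \emph{single} polynomial $H=\det(\partial G/\partial\eta)$ is generically non-vanishing on $Z\cap\C^N$. Then $\partial_w = (\Gamma/H)\partial_\eta$ with $\Gamma$ a fixed polynomial matrix, and $\tilde L_\alpha := H^{2|\alpha|}(\Gamma H^{-1}\partial_\eta)^\alpha$ is a single \emph{holomorphic} operator valid everywhere in the affine chart; all the denominators are absorbed by the global $H^{3|\m|+1}$, which homogenizes to the single $\tau$. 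Your version would also risk introducing $\partial/\partial x_0$ into the operators (Cramer on $\C^{N+1}$ does not exclude that direction), which the paper carefully avoids and which is essential later in Lemma~\ref{sot}. Finally, you do not address the extension across $\{x_0=0\}$: the paper first establishes the identity on $Z'=Z\setminus\{h=0,\,x_0=0\}$, observes that both sides define sections of $\W^Z$ (via Propositions~\ref{jumper2} and~\ref{spadtag}), and then invokes the SEP to extend — a step your sketch omits. The statement for $p=N$, $r+1\ge 0$ (where $Z$ is finite and the affine-chart reduction must still hold) also needs a separate word. So: right skeleton, but the central globalization step — single affine chart, single generic coordinate split, explicit powers of $H$, and SEP extension — is absent.
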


\begin{proof}
To begin with we choose $g_1,\ldots,g_p$, 
$\m:=(\m_1,\ldots,\m_p)$,  and $a$ as in Example~\ref{ros} and Lemma~\ref{skola} so that
\begin{equation}\label{plura}
\mu=a\mu^{g^{\m+\1}}\w\Omega.
\end{equation}
After a projective transformation on $\Pk^N$, i.e., a linear change of variables on $\C^{N+1}$, we may assume 
that each irreducible component of $Z$ intersects  the affine space $\C^N:=\{x_0\neq 0\}$. 
Then the affinizations $G_j$ of $g_j$  are
polynomials in $\C^N$ such that $dG_1\w\ldots\w dG_p$ is nonvanishing on
$Z_{reg}\cap\C^N$, cf., Example~\ref{ros}.
Let $x'=(x_1,\ldots,x_N)$. After possibly a linear transformation of $\C^N$, we may assume that
the polynomial
$$
H:= \det\frac{\partial G}{\partial \eta}
$$
is generically nonvanishing on $Z\cap\C^N$, where
$$
x'=(\zeta,\eta)=(\zeta_1,\ldots,\zeta_n,\eta_1,\ldots\eta_p).
$$ 
%% 
%%%
Let us introduce the short hand notation 
$$
\dbar\frac{1}{G^{\m+\1}}=\dbar\frac{1}{G_1^{\m_1+1}}\w\ldots\w\dbar\frac{1}{G_p^{\m_p+1}}.
$$
We first look for a representation of the Coleff-Herrera current
$$
\tilde\mu =\dbar\frac{1}{G^{\m+\1}}\w d\eta\w d\zeta
$$
at points $x$ on $Z':=Z\cap\C^N\cap\{H\neq 0\}$. Locally at such a point 
we can make the change of variables
$$
w=G(\zeta,\eta),\quad  z=\zeta.
$$
If $\Xi$ is a smooth $(0,n)$-form with small support, and $\Phi$ is holomorphic,
with the notation
$m!=m_1!\cdots m_p!$ and $\partial^\alpha_w=\partial^{|\alpha|}/\partial w^\alpha$, etc, 
in view of Example~\ref{brus} we then have
\begin{multline*}%
\Phi\tilde \mu.\Xi=\int\dbar\frac{1}{G^{\m+\1}}\w d\eta\w d\zeta\w\Phi\Xi=
\pm\int\dbar\frac{1}{w^{\m+\1}}\w dw\w dz\w \frac{\Xi}{H}\Phi=\\
\pm \int_{w=0}\frac{(2\pi i)^p}{\m!} dz\w \partial^\m_w\Big(\frac{\Xi}{H}\Phi\Big)=
\pm \sum_{\alpha\le \m}\int_{w=0}\frac{(2\pi i)^p}{(\m-\alpha)!\alpha!}
dz\w \partial^{\m-\alpha}_w\Big(\frac{\Xi}{H}\Big)
\partial^\alpha_w\Phi.
\end{multline*}
Now, notice that 
$$
\partial_\eta=(\partial_\eta G)\partial_w
$$
so that 
$$
\partial_w=\frac{\Gamma}{H}\partial_\eta,
$$
where $\Gamma$ is a matrix of polynomials. It is readily checked that
\begin{equation}\label{snar}
\tilde L_\alpha:=H^{2|\alpha|}\Big(\frac{\Gamma}{H}\partial_\eta\Big)^\alpha
\end{equation}
has a holomorphic extension across $H=0$. Let us define 
$$
M_\beta \Xi= \pm\frac{(2\pi i)^p}{\beta!(\m-\beta)!}H^{1+|\m|+2|\beta|}
\Big(\frac{\Gamma}{H}\partial_\eta\Big)^\beta\frac{\Xi}{H}.
$$
Then also $M_\beta $ is holomorphic across $H=0$.

With $T=dz=d\zeta$, we  have that
\begin{equation}\label{snor}
\Phi\tilde \mu.\Xi=\int_{Z'}\sum_{\alpha\le \m}\frac{T}{H^{3|\m|+1}}\w M_{\m-\alpha}\Xi
\w \tilde L_\alpha \Phi
\end{equation}
for $\Xi$ with support close to $x$.  
We claim that 
if $\Phi$ is a germ of a holomorphic function at $x$, then 
$\Phi\tilde\mu_x=0$ if and only if $\tilde L_\alpha\Phi=0$ on $Z_x$
for all $\alpha\le \m$. In fact, 
\begin{multline}\label{apa0}
\Phi\tilde\mu_x=0 \iff \Phi\dbar\frac{1}{G^{\m+1}}|_x=0 \iff
\Phi\dbar\frac{1}{w^{\m+1}}|_x=0 \iff  \\
\partial_w^\alpha\Phi=0 \ \text{on}\ Z_x, \ \alpha\le\m
\iff \tilde\L_\alpha\Phi =0 \ \text{on} \ Z_x, \ \alpha\le \m.
\end{multline}
\smallskip
Now, for each $\alpha\le m$, let us homogenize the coefficients in  $\tilde L_\alpha$ to obtain
$\tilde \L_\alpha$ for some fixed degree,  and then let us homogenize $M_{\m-\alpha}$ to $\M_{\m-\alpha}$ so that
the sum of their degrees is a fixed number $\rho$. Let $\tau'$ be the homogenization of $T=d\zeta$, i.e., 
$$
\tau'=d\frac{x_1}{x_0}\w\ldots\w d\frac{x_n}{x_0}
$$
if $x=(x_0,\ldots,x_N)=(x_0,\zeta,\eta)$. 
Finally let us homogenize $H^{3|\m|+1}$ to $h$ so that $\tau:=\tau'/h$ takes values in
$\Ok(-\rho)$. We possibly get some factors $x_0$ in the denominator, but since
$Z$ has no irreducible component in $\{x_0=0\}$ this is acceptable.

Let us define the global current %% 
\begin{equation}\label{plex}
\tilde\mu:=\1_Z\mu^{g^{\m+\1}}\w\Omega
\end{equation}
in  $\P^N$.  
In view of \eqref{plura} it  takes values in $\Ok(r-\deg a)$. At each point $x\in Z'$ it is the
$(r-\deg a)$-homogenization of our previous $\tilde\mu$ but the global current is not
necessarily $\dbar$-closed at $x_0$.   However, in view of \eqref{plura},  \eqref{jumper3}, and \eqref{plex}, 
\begin{equation}\label{plura2}
a\tilde\mu=a\1_Z\mu^{g^{\m+\1}}\w\Omega=\1_Z a\mu^{g^{\m+\1}}\w\Omega=\1_Z\mu=\mu,
\end{equation}
since $\mu$ has support on $Z$,  and thus $a\tilde\mu$ is $\dbar$-closed.

For holomorphic sections $\phi$ of $\Ok(\ell-\deg a)$ and
test forms $\xi$ of bidegree
$(0,n)$ with support in $\P^N\setminus\{h=0, \ x_0=0\}$ and values in $\Ok(-r-\ell)$ we have
\begin{equation}\label{glob1}
\phi\tilde\mu.\xi=\int_Z \sum_{\alpha\le \m}\tau \w \M_{\m-\alpha}\xi
\w \tilde\L_\alpha \phi.
\end{equation}
By Proposition~\ref{jumper2},  $\tau\w\tilde L \phi\w [Z]$
is a global section of
$\W^Z\otimes \Ok(e+\ell)$ 
and thus 
the integrals on the right hand side
of  \eqref{glob1} exist as a principal values for any test form $\xi$.
In view of Proposition~\ref{spadtag} the right hand side of \eqref{glob1}
defines the action on $\xi$ of a global section of 
$\W^Z\otimes\Ok(e+\ell)$.  
Since $\{h=0, \ x_0=0\}\cap Z$ has positive codimension on $Z$
it follows by the SEP that the equality \eqref{glob1} holds for all  $\xi$.
\smallskip

Define the holomorphic differential operators
$\L_\alpha$ by the equality
\begin{equation}\label{apsko}
\L_\alpha\phi=\tilde\L_\alpha(a\phi).
\end{equation}
Then \eqref{skaldjur} follows from \eqref{glob1}. Thus (i) is proved.

\smallskip
For $x\in Z'=Z\setminus\{h=0,\ x_0=0\}$ we have, by \eqref{apa0} and \eqref{apsko},  that
\begin{equation}\label{apa2}
\phi \mu_x=0\  \text{if\ and\ only\ if}\   \L_\alpha\phi=0 \ \text{on}\  Z_x, \ \alpha\le\m.
\end{equation}
Again since $\{h=0, \ x_0=0\}\cap Z$ has positive codimension on $Z$, it follows by
continuity and the SEP that \eqref{apa2} holds for all $x\in Z$. Thus (ii) is proved.

\smallskip
To see (iii),  just notice that 
$$
\tilde L_{\alpha}(\Phi\Psi)=\sum_{\gamma\le\alpha}L_\gamma\Phi c_{\alpha,\gamma} L_{\alpha-\gamma}\Phi,
$$
where $c_{\alpha,\gamma}$ are binomial coefficients. After homogenization and replacing
$\phi$ by $a\phi$ we get (iii) with $\L_{\alpha,\gamma}= c_{\alpha,\gamma} \L_{\alpha-\gamma}$.
\end{proof}

\begin{remark}
One can  check, cf., \cite[Section~5]{AW2},  that 
$\phi\1_Z\tilde\mu=0$ if and only if $\phi$ is in the
intersection of the primary ideals of $(g^{\m+1})$ associated with the 
irreducible components of $Z$.
\end{remark}

Let $\mu$ be a global section of $\CH^Z\otimes\Ok(r)$ in $\P^N$ and let $b$ be a global almost
semi-meromorphic current of bidegree $(0,*)$
with values in  $\Ok(r_1)$. Then $b\mu$ is a section of
$\W^Z\otimes\Ok(r+r_1)$.
Let us also assume that  $ZSS(b)\cap Z$ has positive codimension
in $Z$. 
Consider a representation of $\mu$
as in Theorem~\ref{skohorn}. In view of Theorem~\ref{jumper1} 
%%\ref{jumper2} 
we can define
differential operators $\widehat M_\gamma$  with almost semi-meromorphic coefficients 
so that 
$$
\widehat M_\gamma\xi= M_\gamma( b \xi).
$$
For test forms $\xi$ of bidegree $(0,*)$ with values in $\Ok(-r-\ell)$ 
and with support outside $ZSS(b)$,
and any global holomorphic section $\phi$ of $\Ok(\ell)$ 
we  have
\begin{equation}\label{skaldjur2}
\phi b\mu.\xi=
\sum_{\alpha\le \m}\int_Z \tau\w   \L_\alpha\phi\w \widehat M_{\m-\alpha}\xi.
\end{equation}
%%%
In  view of Propositions~\ref{jumper2} and \ref{spadtag} the right hand side
defines a global section of $\W^Z\otimes\Ok(r+r_1)$.
Since $Z\cap ZSS(b)$ has positive codimension in $Z$, it 
follows that \eqref{skaldjur2} holds globally.

\section{Proof of Theorem~\ref{huvudsats}}\label{grotesk}

Let $X$ be our non-reduced subspace of $\Pk^N$. As was mentioned in the introduction the proof relies on the global 
current  $R^f\w R^X$  that we first discuss.

\subsection{The current $R^X$}
Given a vector bundle $E\to\P^N$, let $\Ok(E)$ denote the associated locally free
analytic sheaf.
We can find a locally free resolution
$$
0\to\Ok(E_{N})\stackrel{c_N}{\to} \cdots \stackrel{c_2}{\to} \Ok(E_1) \stackrel{c_1}{\to}\Ok(E_0)\to\Ok_{\Pk^N}/\J_X\to 0
$$
of $\Ok_{\Pk^N}/\J_X$, where $E_0$ is a trivial line bundle and
$
E_k=\oplus_i^{r_k}\Ok(-d_k^i)
$
for suitable positive numbers $d_k^i$,  
see, e.g.,  \cite{AWsemester}.  In fact, we can use the "same" mappings 
$c_k=(c_k^{ij})$
as in \eqref{plast} but with $c_k^{ij}$ considered as sections of   
$\Ok(d^j_k-d^{i}_{k-1})$. 
There is a natural choice of Hermitian metrics on $E_k$ and following 
\cite[Sections~3 and 6]{AW1} there is an associated current
$$
R^{X}=R_p^{X}+\cdots+R_{N}^{X}
$$
with support on $X_{red}$, where $R_k^{X}$ are $(0,k)$-currents that
take values in $E_k$, and with the property that 
%\ 
$\phi R^{X}=0\  \text{if\ and\ only\ if} \  \phi\in\J_X.$   
%\ 
Furthermore, %%
\begin{equation}\label{pelly}
\dbar R^X_k= c_{k+1}R^X_{k+1}, \quad k\ge 0.
\end{equation}

\begin{prop}
There is a bundle 
\begin{equation}\label{fdef}
F=\oplus_{i=1}^{r_F}\Ok(d_F), 
\end{equation}
a global
section $\mu$ of $\CH^{X_{red}}\otimes F\otimes\Ok(N+1)$, and an almost semi-meromorphic section $b$ of
$\Hom(F,\oplus_{i=p}^{N+1}E_k)$ such that
\begin{equation}\label{plonk}
R^{X}\w\Omega= b\mu.
\end{equation}
in $\Pk^N$.
\end{prop}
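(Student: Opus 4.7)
The plan is to build $\mu$ as a tuple of global Coleff-Herrera currents on $X_{red}$ that generate $\CH^{X_{red}}$ after a sufficient twist, and then to read off $b$ from the local structure of currents in $\W^{X_{red}}$.

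\emph{Construction of $\mu$.} Following Example~\ref{ros}, I would choose global sections $g_1,\ldots,g_p$ of $\Ok(r_0)$ (for a large $r_0$) whose common zero set $Z_g$ has codimension $p$ and contains $X_{red}$, and form the Coleff-Herrera product $\mu^\gamma\w\Omega$ with $\gamma_j=g_j^{\m_j+1}$. By Lemma~\ref{skola}, this is a global section of $\CH^{Z_g}\otimes\Ok(N+1-r_0|\m+\1|)$, which restricts via $\1_{X_{red}}$ to a global section of $\CH^{X_{red}}$ with the appropriate twist. Since each stalk of $\CH^{X_{red}}$ is cyclic as an $\Ok$-module (cf.\ \cite[Theorem~1.1]{A9}), the sheaf $\CH^{X_{red}}$ is coherent, and by Serre vanishing a sufficiently large twist $d_F$ admits finitely many global sections of the form $h^{(i)}\mu^\gamma\w\Omega$ that generate all stalks. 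Assembling these into a tuple yields $\mu$ with values in $F\otimes\Ok(N+1)$, where $F=\oplus_{i=1}^{r_F}\Ok(d_F)$.

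\emph{Extraction of $b$.} By the construction in \cite{AW1}, each $R^X_k\w\Omega$ is pseudomeromorphic of bidegree $(N,k)$, takes values in $E_k\otimes\Ok(N+1)$, is supported on $X_{red}$, and has the SEP there, so it is a section of $\W^{X_{red}}\otimes E_k\otimes\Ok(N+1)$. The local structure theorem for $\W^Z$-currents (see \cite[Sections~3--4]{AW3}) writes any such current locally as an almost semi-meromorphic form wedge a Coleff-Herrera product. Using the generators built above, I would express $R^X_k\w\Omega$ locally as a linear combination of the components of $\mu$ with almost semi-meromorphic coefficients valued in $E_k$; over all $k$ these coefficients assemble into a local almost semi-meromorphic section $\tilde b$ of $\Hom(F,\oplus_k E_k)$ satisfying $R^X\w\Omega=\tilde b\,\mu$ on its domain.

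\emph{Globalization.} On the complement of the Zariski singular support of $\mu$ in $X_{red}$, the tuple $\mu$ is generically non-degenerate, so the coefficient $b$ is uniquely determined by the identity $R^X\w\Omega=b\mu$. The local pieces therefore agree on overlaps there, and the description of almost semi-meromorphic currents as principal values $\pi_*(\omega/\sigma)$ (Section~\ref{prutman}) lets them patch to a global almost semi-meromorphic $b$. The identity then propagates across the exceptional locus and holds on all of $\Pk^N$ by the SEP, since both sides lie in $\W^{X_{red}}$ and agree outside a subset of positive codimension in $X_{red}$.

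\emph{Main obstacle.} The most delicate point is the generation step: verifying that finitely many global sections of the particular form $h\mu^\gamma\w\Omega$ generate every stalk of $\CH^{X_{red}}\otimes\Ok(d_F+N+1)$ for a \emph{uniform} line bundle twist $d_F$, so that $F$ can honestly be taken as a direct sum of copies of a single $\Ok(d_F)$. This requires combining the local cyclicity from \cite{A9}, the coherence of $\CH^{X_{red}}$ as an $\Ok_{\Pk^N}$-module, and Serre vanishing, with careful bookkeeping of the orders $\m_j$ and the resulting negative twist $-r_0|\m+\1|$ that must be absorbed into $d_F$.
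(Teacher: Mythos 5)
Your proposal diverges fundamentally from the paper's argument, and the divergence is where a genuine gap occurs. The key false step is the claim that ``each stalk of $\CH^{X_{red}}$ is cyclic as an $\Ok$-module'' and that $\CH^{X_{red}}$ is therefore coherent. Theorem~1.1 of \cite{A9} is a factorization result for an \emph{individual} Coleff--Herrera current: given $\mu$, one can find \emph{some} tuple $\gamma$ (with exponents $\m_j$ depending on $\mu$, as Example~\ref{ros} makes explicit) and a holomorphic $a$ with $\mu=a\mu^\gamma\w\omega$. It does not say that a single $\mu^\gamma$ generates the whole stalk. Already for $Z=\{0\}\subset\C$, the stalk $\CH^{\{0\}}_0$ contains $\dbar(1/z^k)\w dz$ for all $k$, and no finitely generated $\Ok_0$-submodule contains them all; so $\CH^Z$ is not $\Ok$-coherent and the ``coherence~$+$~Serre vanishing'' step has nothing to stand on. A related secondary issue: you invoke a ``local structure theorem for $\W^Z$-currents'' to write $R^X_k\w\Omega$ locally as an ASM form wedged with a \emph{fixed} tuple $\mu$, but no such theorem with a prescribed right-hand factor exists; the Coleff--Herrera product appearing in any factorization again depends on the current. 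Proving precisely such a factorization for $R^X\w\Omega$ is the content of the proposition, so it cannot be cited.

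The paper sidesteps all of this by extracting $\mu$ directly from the free resolution rather than from the abstract sheaf $\CH^{X_{red}}$. Since $\Ker c_{p+1}^*\subset\Ok(E_p^*)$ is coherent, one can twist by a single $\Ok(d_F)$ and generate it by finitely many global sections; dualizing gives a map $g\colon\Ok(E_p)\to\Ok(F)$ with $gc_{p+1}=0$, and one sets $\mu:=gR^X_p\w\Omega$. The relation $gc_{p+1}=0$ together with $\dbar R^X_p=c_{p+1}R^X_{p+1}$ makes $\mu$ $\dbar$-closed, and the dimension principle gives the SEP, so $\mu$ is a bona fide $\CH^{X_{red}}$-valued section. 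The almost semi-meromorphic factor $b$ is then built from concrete data: the ASM retraction $\sigma_F$ (with $\sigma_F g=\mathrm{Id}$ on a suitable complement) gives $R^X_p\w\Omega=\sigma_F\mu$, and the ASM comparison maps $\alpha_{k+1}$ from \cite[Theorem~4.4]{AW1}, satisfying $R^X_{k+1}=\alpha_{k+1}R^X_k$, give $R^X_k\w\Omega=\alpha_k\cdots\alpha_{p+1}\sigma_F\mu$. The pure-dimension hypothesis enters exactly here, via the bound $\codim X_{p+\ell}\ge p+\ell+1$ on the degeneracy loci, which justifies both the SEP of the relevant currents and the equality across those loci. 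Nothing in your outline supplies a replacement for this mechanism, and the coherence claim on which your construction of $\mu$ rests is false.
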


\begin{proof} Since the kernel $\K$ of $c_{p+1}^*\colon \Ok(E^*_p){\to}\Ok(E^*_{p+1})$ is coherent, 
for a large enough integer $d_F$, $\K\otimes\Ok(d_F)$ is generated by global sections
$g_1,\ldots, g_{r_F}$. We therefore have a surjective sheaf mapping
$\oplus_1^{r_F}\Ok\to\K\otimes\Ok(d_F)$ and hence
$\oplus_1^{r_F}\Ok(-d_F)\to\K$. Define $F$ by \eqref{fdef} 
and let $g\colon \Ok(E_p)\to \Ok(F)$
be the dual of the composed  mapping $\Ok(F^*)\to\K\to \Ok(E^*_p)$.
We then have the exact sequence
\begin{equation}\label{anka}
\Ok(F^*)\stackrel{g^*}{\to}\Ok(E^*_p)\stackrel{c^*_{p+1}}{\to}\Ok(E^*_{p+1})
\end{equation}
of sheaves. 
We  claim that
$$
\mu:=g R^X_p\w\Omega
$$
is a global (vector-valued) Coleff-Herrera current.  
In fact, in view of \eqref{pelly}, 
$$
\dbar \mu=\dbar g R^X_p\w\Omega=g\dbar R^X_p\w\Omega=
g c_{p+1} R^X_{p+1}\w\Omega=0,
$$
since $g c_{p+1}=0$. Because of the dimension principle $\mu$ must have the SEP
with respect to $X_{red}$ and hence it is, by definition, a Coleff-Herrera current
and thus a section of $\CH^{X_{red}}\otimes F \times \Ok(N+1)$.

Let $X_{p+1}$ be the subset of $X_{red}$ where $s_{p+1}$ does not have
optimal rank. Let us choose a Hermitian norm on $F$, and define 
$\sigma_F\colon F\to E_p$ on the complement of $Z_{p+1}$ so that 
$\sigma_F=0$ on the orthogonal complement of $\Im g$ and 
$\sigma_F g =I$ on the orthogonal complement of $\Ker g$.   It is shown in
\cite[Section 2]{AW2} that $\sigma_F$ has an almost semi-meromorphic
extension across $X_{p+1}$; let us denote the extension by $\sigma_F$ as well. 
Following the proof of \cite[Proposition~3.2]{Sz} we see (this is just a local
argument) that $R^X_p=\sigma_F g R^X_p$ outside $X_{p+1}$. The right hand
side here is defined in view of Theorem \ref{jumper2}.   Since both sides have the
SEP on $X_{red}$ we conclude that they coincide in $\Pk^N$.  Thus
\begin{equation}\label{polly}
R^X_p\w\Omega=\sigma_F \mu.
\end{equation}
From \cite[Theorem 4.4]{AW1} we get global almost semi-meromorphic
sections $\alpha_{k+1}$ of $\Hom(E_k, E_{k+1})$, $k=p, p+1,\ldots$,
that are smooth outside analytic subsets $X_{k+1}$ of $X_{red}$ where
$s_{k+1}$ do not have optimal rank, such that
$$
R^X_{k+1}=\alpha_{k+1} R^X_k.
$$
Since $X$ has pure dimension it follows that
$\codim X_{p+\ell}\ge p+\ell+1$ according to \cite[Corollary 20.14]{Eis}. 
Arguing as in the proof of \cite[Proposition~3.2]{Sz} we now get for each
$k\ge p+1$, in view of 
\eqref{polly},  the representation 
\begin{equation}\label{pretty}
R^X_k=\alpha_k\cdots \alpha_{p+1}\sigma_F \mu.
\end{equation}
Now let $b_k=\alpha_k\cdots \alpha_{p+1}\sigma_F$. Then $b_k$ is
an almost semi-meromorphic, see \cite[Section 3.1]{AW3},
and by \eqref{pretty},
$R^X_k=b_k\mu$ where $b_k$ is smooth, that is, outside $Z_{p+1}$.
Since $\1_{Z_p+1}\mu=0$ it follows from \eqref{jumper3} that 
$R^X_k=b_k\mu$. Thus the proposition follows with $b=b_p+\cdots+b_N$. 
\end{proof}

%% ooops!  Hur vet vi att detta är på formen $b\mu$ ???

\subsection{The current $R^a\w R^X$} 
Assume that we have sections $a_1,\ldots, a_m$ of a Hermitian
line bundle $S$ over some open set $\U\subset\Pk^N$ and
let $E$ be a trivial rank $m$ bundle. Then we have  
interior multiplication %Koszul complex
$\delta_a\colon \Lambda ^{*+1}E\otimes S^{-*-1}\to 
\Lambda ^{*}E\otimes S^{-*}$,
and we can consider the induced double complex as in the proof of 
Lemma \ref{skola} above. 
Following \cite[Example~2.1]{AWsemester} we define the Bochner-Martinelli form
$U^a=U^a_1+\cdots +U^a_N$,  explicitly from the $a_j$. 
The components $U^a_k$ are almost semi-meromorphic 
$(0,k-1)$-forms with values in $\Lambda ^{k}E\otimes S^{-k}$
that are smooth outside the common zero set $Z_a$ of the $a_j$.
Moreover, $(\delta_a-\dbar)U^a=1$ outside $Z_a$.  We thus have the residue current
$$
R^a:= 1-(\delta_a-\dbar)U^a,
$$
with support on $Z_a$, whose components $R^a_k$ are $(0,k)$-currents with values in
$\Lambda ^{k}E\otimes S^{-k}$.  
If $\chi_\epsilon=\chi(|a|^2/\epsilon)$,  where $\chi$ is a function as in \eqref{smorgas} above, then $U^{a,\epsilon}=\chi_\epsilon U^a$ are smooth and tend
to $U^a$.  Thus 
$$
R^{a,\epsilon}=1-(\delta_a-\dbar)U^{a,\epsilon}=
1-\chi_\epsilon +\dbar\chi_\epsilon\w U^a
$$
tend to $R^a$.  
%%we can form
%%the associated residue current of Bochner-Martinelli typ $R^a$ in $\U$ with support
%%on $Z_a$, see, e.g., \cite[Example~2.1]{AWsemester}.
%%More precisely, there is an almost semi-meromorphic current $U^a$ 
%%explicitly formed by the $a_j$, with $ZSS(U^a)=Z_a$ such that
%%$R^a=\lim_{\delta\to 0} R^{a,\delta}$, where
As in \cite[Section~2.5]{AWsemester}, cf., \eqref{hoppsan} above,
we can form the product
\begin{equation}\label{allan}
R^a\w R^X\w\Omega:=\lim_{\epsilon\to 0} R^{a,\epsilon}\w R^X\w\Omega.
\end{equation}
We will use the following important property, which follows from 
\cite[(2.19)]{AWsemester} and the proof  
\cite[Lemma 2.2]{AWsemester}:

\begin{lma}\label{pixbo}
If $\Phi$ is holomorphic
 and 
$
\Phi R^a\w R^X\w\Omega=0
$
at $x$, then  $\Phi$ is in $(a)_x+\J_{X,x}$.
\end{lma}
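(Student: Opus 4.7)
The claim is local at $x$, so I would pass to the stalk at $x$ and aim to show $\Phi \in (a_1,\ldots,a_m)_x + \J_{X,x}$.

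First, I would reduce to a Coleff-Herrera setting. By the proposition just proved, we have $R^X\w\Omega = b\mu$, where $\mu$ is an $F$-valued Coleff-Herrera current on $X_{red}$ and $b$ is almost semi-meromorphic with $ZSS(b)\cap X_{red}$ of positive codimension in $X_{red}$. The hypothesis then reads $\Phi R^a\w b\w\mu=0$ at $x$, where the product is defined by Theorem~\ref{jumper2} and the limit \eqref{allan}. Locally at $x$, by the construction underlying Theorem~\ref{skohorn}, write $\mu$ in the form $\mu=a'\mu^{g^{\m+\1}}\w\Omega$ for some local holomorphic tuple $g=(g_1,\ldots,g_p)$ whose common zero set has codimension $p$ and contains $X_{red,x}$, and a holomorphic factor $a'$; by the defining property of $R^X$, the annihilator of $b\mu$ at $x$ is exactly $\J_{X,x}$.

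Second, I would analyze $\Phi R^a \w \mu^{g^{\m+\1}}\w\Omega$ via the comparison formula of \cite[(2.19) and Lemma 2.2]{AWsemester}, applied with the Coleff-Herrera product $\mu^{g^{\m+\1}}\w\Omega$ playing the role of $R^{X_{red}}\w\Omega$. In the regularised limit $\epsilon\to 0$, that formula identifies $\Phi R^a \w \mu^{g^{\m+\1}}\w\Omega$ with the Coleff-Herrera product formed by the tuple $(a_1,\ldots,a_m,g_1^{\m_1+1},\ldots,g_p^{\m_p+1})$ multiplied by $\Phi$, modulo an explicit remainder lying in the submodule $(a_1,\ldots,a_m)\cdot\CH$. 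Combining with the vanishing hypothesis and the Dickenstein-Sessa-Passare annihilator theorem (quoted in Section~\ref{prutman}), this forces $\Phi\in(a)_x+(g^{\m+\1})_x$. Finally, I would upgrade $(g^{\m+\1})_x$ to $\J_{X,x}$ using the Noetherian operator characterisation in Theorem~\ref{skohorn}(ii) together with the first step's identification of the annihilator of $b\mu$ at $x$. This yields the desired $\Phi\in(a)_x+\J_{X,x}$.

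The main obstacle is the second step: one must adapt the comparison formula of \cite{AWsemester}, derived in the reduced complete-intersection setting, to the present situation involving the almost semi-meromorphic factor $b$ and the vector-valued non-reduced Coleff-Herrera factor $\mu$. Justifying the interchange of the limits in \eqref{allan} and \eqref{blus2}, and ensuring that $\1_{Z_a}$ commutes correctly with $b\w(\cdot)$, requires the SEP together with \eqref{jumper3} and Theorem~\ref{jumper2}. A further subtlety is that the Björk tuple $g$ need not cut out $X_{red}$ as a complete intersection, but since $\mu$ is supported on $X_{red}$ and $b\mu$ has annihilator $\J_{X,x}$, this discrepancy is absorbed precisely when passing from the containment $(g^{\m+\1})_x$ to $\J_{X,x}$ in the final step.
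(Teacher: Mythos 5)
Your proposal goes wrong at the second step, and the gap is not a technicality. The hypothesis is the vanishing of $\Phi R^a\w R^X\w\Omega=\Phi R^a\w b\mu$, but the conclusion you extract from it --- $\Phi\in(a)_x+(g^{\m+\1})_x$ --- could only come from the vanishing of $\Phi R^a\w\mu^{g^{\m+\1}}\w\Omega$, a strictly stronger statement that you never establish: the factors $b$ and $a'$ cannot be stripped off, since $\ann(b\mu)\supset\ann\mu\supset(g^{\m+\1})$ with strict containments in general. Concretely, every $\Phi\in\J_{X,x}$ satisfies the hypothesis (if $\Phi R^X=0$ then $\Phi R^{a,\epsilon}\w R^X=0$ for each $\epsilon$, since $R^{a,\epsilon}$ is smooth, and one passes to the limit), yet $\J_{X,x}\not\subset(a)_x+(g^{\m+\1})_x$ in general --- take for instance $m=1$ and $a_1=g_1^{\m_1+1}$ with $\J_{X,x}$ not equal to the complete intersection ideal $(g^{\m+\1})_x$ --- so your intermediate conclusion is simply false. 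A second, independent problem is that the ``Coleff--Herrera product of the combined tuple $(a_1,\ldots,a_m,g_1^{\m_1+1},\ldots,g_p^{\m_p+1})$'' is not available: such a product is anticommuting and subject to the Dickenstein--Sessa--Passare annihilator theorem only for complete intersections, whereas $Z_a\cap X_{red}$ has codimension at most $\min(m,n)+p$, which is less than $m+p$ in general.

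The lemma is not proved in the paper by any annihilator computation; it is quoted from \cite[(2.19)]{AWsemester} together with the proof of \cite[Lemma~2.2]{AWsemester}, whose mechanism is entirely different. That equation is the homotopy identity $(\delta_a-\dbar)(U^a\w R^X)=R^X-R^a\w R^X$ (up to the bookkeeping of the tensor-product complex), so the hypothesis gives $(\delta_a-\dbar)(\Phi U^a\w R^X)=\Phi R^X$. Combining this with the homotopy $\nabla u=1-R^X$ coming from the locally free resolution of $\Ok_{\Pk^N}/\J_X$, one writes $\Phi=\nabla W$ for an explicit current $W$ in the total complex, and membership $\Phi\in(a)_x+\J_{X,x}$ then follows by solving a finite sequence of local $\dbar$-equations, exactly as in the argument showing that $\ann R^X=\J_X$. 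If you want a self-contained proof, that is the route to take; the decomposition $R^X\w\Omega=b\mu$ and the Bj\"ork tuple $g$ play no role in this lemma.
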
  

\begin{remark}[Warning!]  Although the components $R^a_k$ 
of $R^a$ vanish for small $k$ because of the dimension principle, 
the  terms  $R^a_k\w R^X$ might be nonzero.  See, e.g.,  \cite{AW3} for examples.
\end{remark}

%%\begin{remark}
%%One can check that  \eqref{badboll} implies that \eqref{allan}
%%only depends on the images of $a_j$ in $\Ok^{\Pk^N}/\J_X$. 
%%\end{remark}

\subsection{End of proof of Theorem \ref{huvudsats}}

To begin with we assume that $p=\codim Z\le N-1$.
Let $\mu$ be the (vector-valued) Coleff-Herrera current in the 
representation \eqref{plonk} of  $R^X\w\Omega$.
Let us consider $\mu$ as an $r_F$-tuple of Coleff-Herrera currents, and let
$\L_\alpha$, $\alpha\le\m$, be a (tuple of) Noetherian operators obtained from 
Theorem~\ref{skohorn}. Moreover,
let $\widehat M_\alpha$ be the associated differential operators with almost semi-meromorphic
coefficients so that \eqref{skaldjur2} holds.

\smallskip
At a given point $x\in X_{red}$ there is a number $\nu_x$ such that
if $(a)=(a_1,\ldots, a_m)\subset\Ok_{X,x}$ is a local ideal,
and $\phi\in\Ok_{X,x}$, then
$|\L_\alpha\phi|\le C|a|^{\nu}$ on $X_{red,x}$ for all $\alpha\le\m$ implies that 
$\phi R^a\w R^X\w\Omega=0$.  This is precisely 
the main step of the proof of \cite[Theorem~1.2]{Sz} and we do not repeat
it here (just notice that our number $\nu_x$ is called $N$ in \cite{Sz},
our $\widetilde M_\alpha$ are called $\tilde K_\alpha$, 
moreover,  the non-reduced space that we call $X$
is denoted by $Z$  in \cite{Sz}  whereas
$X$ denotes the associated reduced space!). In this proof the 
number $\nu_x$ is explicitly deduced from the singularities of the the coefficients of
$\widehat M_\alpha$ and of $b$, expressed as the degree of monomials
in a suitable log resolution of $X_{red}$,  see \cite[Eq.\ (4.9)]{Sz}.
In particular, the number $\nu_x$
works for all points in a \nbh of $x$.  By compactness we therefore get:

\begin{prop}\label{daggmask} 
There is a number $\nu$,  such that if $x\in X_{red}$, 
$(a)=(a_1,\ldots, a_m)\subset\Ok_{X,x}$ is a local ideal,
and $\phi\in\Ok_{X,x}$, then
$|\L_\alpha\phi|\le C|a|^{\nu}$ on $X_{red,x}$ for all $\alpha\le\m$ implies that 
$\phi R^a\w R^X\w\Omega=0$.
\end{prop}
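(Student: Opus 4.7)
The plan is to reduce the global statement to the local argument of Sznajdman and then exploit the compactness of $X_{red}\subset\Pk^N$. First I would fix an arbitrary point $x\in X_{red}$ and apply the main estimate in the proof of \cite[Theorem~1.2]{Sz}. The setup is exactly the one available here: the Coleff-Herrera representation \eqref{plonk} of $R^X\w\Omega$, the Noetherian operators $\L_\alpha$ produced in Theorem~\ref{skohorn}, and the associated operators $\widehat M_\alpha$ with almost semi-meromorphic coefficients satisfying \eqref{skaldjur2}. Sznajdman's argument then yields a number $\nu_x$ such that any $\phi\in\Ok_{X,x}$ satisfying $|\L_\alpha\phi|\le C|a|^{\nu_x}$ on $X_{red,x}$ for all $\alpha\le\m$ annihilates $R^a\w R^X\w\Omega$ at $x$.

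Next I would verify that this $\nu_x$ is in fact valid on an entire open neighborhood of $x$. The key observation — recorded in \cite[Eq.~(4.9)]{Sz} — is that $\nu_x$ is produced explicitly in terms of the orders of the poles of the coefficients of $\widehat M_\alpha$ and of the almost semi-meromorphic section $b$ of \eqref{plonk}, expressed as degrees of monomials in a log resolution of $X_{red}$. Since such a resolution is valid on an open set $U_x\ni x$ and the relevant pole orders are upper semicontinuous in the base point, the same $\nu_x$ works uniformly at every point of $U_x$.

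Finally, because $X_{red}$ is a closed subvariety of $\Pk^N$ and therefore compact, the open cover $\{U_x\}_{x\in X_{red}}$ admits a finite subcover $U_{x_1},\ldots,U_{x_K}$. Setting $\nu:=\max_{i}\nu_{x_i}$ yields the claimed uniform exponent.

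The only non-automatic step, and the one I would pay the most attention to, is the local uniformity in the second paragraph: one needs to confirm that a single log resolution chosen near $x$ controls $\widehat M_\alpha$, $b$, and the Sznajdman bound \cite[Eq.~(4.9)]{Sz} on a whole neighborhood of $x$, and that no piece of the estimate depends on data that could jump as the base point moves. All of this is built into the construction, but it should be checked carefully; once it is in place, compactness finishes the argument routinely.
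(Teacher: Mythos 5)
Your proposal is correct and follows essentially the same route as the paper: at each point $x$ Sznajdman's local argument yields $\nu_x$, which is valid throughout a neighborhood of $x$ since it is read off from pole orders of the coefficients of $\widehat M_\alpha$ and $b$ in a log resolution of $X_{red}$, and compactness of $X_{red}$ then gives a uniform $\nu$.
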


Combined with  Lemma \ref{pixbo} we have thus obtained
 $\nu$ and differential operators $\L_\alpha$  so that
part (i) of Theorem~\ref{huvudsats} holds.

%%Using the representation \eqref{skaldjur2}  of $R^{X}\w\Omega=b\mu$,
%%the proof of this proposition is precisely the main step of the proof of 
 
%%
\smallskip
Now let $F_j$ be polynomials as in Theorem~\ref{huvudsats} (ii), let
$f_j$ be the $d$-homogenizations considered as section of $\Ok(d)$ over $X_{red}$ and 
let $\J_f$ be the associated ideal sheaf as in the introduction.  

\begin{lma}\label{sot} 
Let  $\Phi$ be a polynomial such  that \eqref{kaka} holds and let $\phi$
be the $\rho$-homogenization of $\Phi$. 
If 
\begin{equation}\label{palsdjur}
\rho\ge \deg\Phi+\nu d^{c_\infty}\deg X_{red},
\end{equation}
then 
$
|\L_\alpha\phi|\le C|f|^\nu
$
for all $\alpha$.
\end{lma}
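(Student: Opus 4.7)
The plan is to verify the inequality $|\L_\alpha\phi| \leq C|f|^\nu$ pointwise on $X_{red}$, treating the affine part $V_{red}$ and the hyperplane at infinity $X_{red,\infty} := X_{red} \cap \{x_0 = 0\}$ separately.

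First I would handle $V_{red}$.  Working in the chart $\{x_0 \neq 0\}$ with the frame $x_0^\ell$ of $\Ok(\ell)$, sections become functions: $\phi$ corresponds to $\Phi$ (multiplied by a power of $x_0$ reflecting the fact that the homogenization is to degree $\rho$ rather than the natural degree $\deg\Phi$), the sections $f_j$ correspond to $F_j$, and by construction the Noetherian operators $\L_\alpha$ restrict to the affine Noetherian operators $L_\alpha$ appearing in the hypothesis.  Thus \eqref{kaka} translates directly into the bound $|\L_\alpha\phi| \leq C|f|^\nu$ on $V_{red}$; the auxiliary factors involving $x_0$ are harmless away from $\{x_0=0\}$.

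The delicate step is extending this bound across $X_{red,\infty}$.  At a point where $f$ does not vanish the inequality is immediate by continuity, so the only genuine case is $Z_f \cap X_{red,\infty}$.  Here the plan is to invoke the Ein--Lazarsfeld geometric estimate from \cite{EL}, in the projective form used in \cite{AWsemester}: since every distinguished variety of $\J_f$ contained in $X_{red,\infty}$ has codimension at most $c_\infty$, one obtains a local estimate controlling the vanishing of $x_0$ in terms of $|f|$, of the rough shape $|x_0|^{d^{c_\infty}\deg X_{red}} \leq C|f|\cdot(\text{bounded factor})$ on $X_{red}$ near $X_{red,\infty}$.  The hypothesis $\rho \geq \deg\Phi + \nu\, d^{c_\infty}\deg X_{red}$ then precisely supplies the extra factor $x_0^{\nu d^{c_\infty}\deg X_{red}}$ in $\phi$ (compared to its natural homogenization) that can be absorbed, via $\nu$ applications of the Ein--Lazarsfeld estimate, into the required $|f|^\nu$.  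Since $\L_\alpha$ is a holomorphic differential operator of some fixed degree, applying it to $\phi$ only affects the bound by a uniformly bounded prefactor on the compact $X_{red}$.

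The main obstacle will be the detailed degree bookkeeping at infinity: one must track how $\L_\alpha$, being a differential operator with polynomial coefficients, interacts with the Ein--Lazarsfeld estimate and with the vanishing of $x_0$, while verifying that all factors of $x_0$ actually appear with nonnegative exponents thanks to the hypothesis on $\rho$.  The argument should closely parallel the reduced case treated in \cite{AWsemester}, where the same $\nu\, d^{c_\infty}\deg X_{red}$ term is produced and $\L_\alpha$ is simply the identity; the only genuinely new check is that the Noetherian operators behave well under the affinization/homogenization correspondence, which is precisely what is built into the construction in Theorem~\ref{skohorn}.
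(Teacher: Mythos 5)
Your overall strategy matches the paper's: split the analysis between the affine part $V_{red}$, where \eqref{kaka} applies directly, and the points at infinity $X_{red,\infty}$, where the Ein--Lazarsfeld estimate together with the hypothesis \eqref{palsdjur} supplies the extra vanishing of $x_0$ needed to control $\L_\alpha\phi$ against $|f|^\nu$. The paper organizes this more cleanly by passing to the normalized blow-up $\pi\colon\tilde X\to X_{red}$ along $\J_f$ with exceptional divisor $\sum r_j W_j$, and converting the pointwise inequality $|\L_\alpha\phi|\le C|f|^\nu$ into the condition that $\pi^*(\L_\alpha\phi)$ vanish to order $\ge\nu r_j$ on each $W_j$; the Ein--Lazarsfeld bound then appears directly as $r_j\le d^{\codim\pi(W_j)}\deg X_{red}$. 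Your ``rough shape'' inequality $|x_0|^{d^{c_\infty}\deg X_{red}}\le C|f|\cdot(\text{bounded})$ is not quite the right formulation; the clean version is the multiplicity bound on the $r_j$'s.

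The genuine gap is in your treatment of $\L_\alpha$ near infinity. You write that ``applying it to $\phi$ only affects the bound by a uniformly bounded prefactor on the compact $X_{red}$,'' but this is not justified: a differential operator does not in general preserve pointwise inequalities, and in particular could destroy the vanishing of $x_0^{\rho-\deg\Phi}$ in $\phi=x_0^{\rho-\deg\Phi}\varphi$ that your argument relies on. You do flag the ``degree bookkeeping at infinity'' as the main obstacle, but you do not resolve it. The resolution in the paper is a specific structural fact about the operators produced in Theorem~\ref{skohorn}: \emph{$\L_\alpha$ does not involve the derivative $\partial/\partial x_0$}. Because of this, $\L_\alpha\phi$ still carries the full factor $x_0^{\rho-\deg\Phi}$, hence $\pi^*(\L_\alpha\phi)$ vanishes to order at least $\rho-\deg\Phi\ge\nu d^{c_\infty}\deg X_{red}\ge\nu r_j$ on each $W_j\subset\pi^{-1}(X_{red,\infty})$. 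Without this observation the estimate at infinity does not go through, so this step needs to be made explicit in your proof.
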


\begin{proof}
Let $\pi\colon \tilde X\to X_{red}$ be the normalization of the blow-up of $X_{red}$ along $\J_f$ and let
$\sum r_jW_j$ be the exceptional divisor, where $W_j$ are the 
irreducible components and  $r_j$ the corresponding multiplicities. 
Notice that if $\psi$ is a holomorphic section of some $\Ok(\ell)$, then
$|\psi|\le C|f|^\nu$ if and only if $\pi^*\psi$ vanishes to order at 
least $\nu r_j$ on $W_j$ for each $j$. 

If \eqref{kaka} holds on $V_{red}$, then  $\pi^*(\L_\alpha\phi)$ vanishes
to order $\nu r_j$ on each $W_j$ that is not fully contained in $\pi^{-1}(X_{red,\infty})$.
Notice that
$$
\phi=x_0^{\rho-\deg\Phi}\varphi,
$$
where $\varphi$ is the $\deg\Phi$-homogenization of $\Phi$ and thus holomorphic. 
If $W_j$ is contained in
$\pi^{-1}X_{red,\infty}$, then $\phi$ vanishes at least to order $\rho-\deg\Phi$ on $W_j$. 
Since $\L_\alpha$ does not involve the derivative $\partial/\partial x_0$
%%only involve derivatives with respect to $x_1,\ldots, x_n$,
%%in particular not $x_0$, 
also $\L_\alpha \phi$ vanishes to order  $\rho-\deg\Phi$ on $W_j$.
By the geometric estimate in \cite{EL}, cf., \cite[Eq.~(6.2)]{AWsemester},  we have that
$$
r_j\le d^{\codim \pi(W_j)}\deg X_{red}.
$$
If \eqref{palsdjur} holds, %%$\rho-\deg\Phi\ge \nu d^{c_\infty}$  
therefore $\pi^*(\L_\alpha\phi)$ vanishes, at least,  to order
$\nu r_j$ on $W_j$ for all $j$. Thus the lemma follows.
\end{proof}

With the same hypotheses as in Lemma~\ref{sot}
it follows from the lemma and Proposition~\ref{daggmask}
that 
\begin{equation}\label{sot2}
\phi R^f\w R^X\w\Omega=0.
\end{equation}
If in addition 
$$
\rho\ge (d-1)\min(m,n+1)+\reg X,
$$ 
%%following the set-up in \cite{AWsemester},  
we can now solve a sequence of global
$\dbar$-equations in $\Pk^N$ and get a global solution  $q_j$ to
$\phi=f_1q_1+\cdots+f_mq_m$, cf.,  \cite[Lemma~4.3]{AWsemester}.  
The fact that $X$ is not reduced plays no role here.     
After dehomogenization we obtain the desired representation of $\Phi$, and so the
proof of Theorem~\ref{huvudsats} is complete in case $p\le N-1$.

\smallskip
Now assume  that $p=\codim Z=N$ so that $X_{red}$ is a finite set in $\C^N\simeq \P^N\setminus\{x_0=0\}$.
If necessary we  multiply $\mu$  by a suitable power
of $x_0$ to be able to apply Theorem~\ref{skohorn}. We then get the global, in $\C^N$,
$L_\alpha$ that form a complete set 
of Noetherian operators at each point $x\in X_{red}$.  Part (ii) is trivial,
since the image of any ideal $(a)\subset\Ok_{X,x}$ in $\Ok_{X_{red},x}$ is just
either $(0)$ or $(1)=\Ok_{X_{red},x}$.

%%Since now $Z$ is a finite
%%set in  $\C^N$, this does not change $\mu$.
%%We then get our
%%holomorphic differential operators $\L_\alpha$ at least 
%%in $\C^N$. The rest of the proof follows the same lines but since 
%%in this case $Z$ is a finite set, we avoid the problems at infinity.

%%\vspace{1cm}

%%have that 
%%$\L_\alpha\phi=x_0^{r_j\nu}\widehat\phi$,
%%where $\widehat\phi$ is holomorphic, and hence 
%%$\pi^*(\L_\alpha\phi)$ vanishes at least to order $r_j\nu$ on $W_j$.  Thus we have

\def\listing#1#2#3{{\sc #1}:\ {\it #2},\ #3.}

\end{document}